\documentclass[smallcondensed]{svjour3}

\usepackage{amsfonts,latexsym}
\usepackage{epsfig}
\usepackage{amsmath}
\usepackage{amssymb}
\usepackage{graphicx}
\usepackage{caption}
\usepackage{subfig}
\usepackage{sidecap}
\usepackage{caption}
\usepackage{color}

\usepackage[latin1]{inputenc}
\usepackage[english]{babel}
\usepackage{wasysym}
\usepackage{bm}
\usepackage{appendix}
\usepackage[latin1]{inputenc}
\usepackage[english]{babel}

\begin{document}
\title{Space-Time  Decomposition of Kalman Filter 
%\thanks{Grants or other notes
%about the article that should go on the front page should be
%placed here. General acknowledgments should be placed at the end of the article.}
}

\author{Luisa D'Amore, Rosalba Cacciapuoti}
\institute{Luisa D'Amore  \and Rosalba Cacciapuoti
\at
            University of Naples Federico II, Naples, Italy\\
            \email{(luisa.damore,rosalba.cacciapuoti)@unina.it} ORCID: https://orcid.org/0000-0002-3379-0569}  
%             \emph{Present address:} of F. Author  %  if
\maketitle

\renewcommand{\thefootnote}{\fnsymbol{footnote}}

\begin{abstract}
  We present an innovative interpretation of Kalman Filter (KF, for short) combining  the ideas of Schwarz Domain Decomposition (DD) and Parallel in Time (PinT)  approaches. Thereafter  we call it  DD-KF. In contrast to  standard DD approaches  which are already incorporated in KF and other state estimation models, implementing a straightforward data parallelism inside the loop over time, DD-KF  \emph{ab-initio} partitions  the whole model,  including  filter equations and dynamic  model along both space and time directions/steps. As a consequence,  we  get  local KFs  reproducing the original filter at smaller dimensions on local domains. Also,  sub problems could be solved in parallel.  In order to enforce the matching of local solutions
on  overlapping regions, and then  to  achieve the same global solution of KF,   local KFs are slightly modified by adding a
correction term keeping track of contributions of adjacent subdomains to  overlapping regions. Such a correction term balances localization errors along overlapping regions, acting as a regularization constraint on local solutions. Furthermore,  such a localization excludes remote observations from each analyzed location improving the conditioning of the error covariance matrices. 
As dynamic model we consider Shallow Water equations which can be regarded a consistent tool to get a proof of concept of the reliability assessment  of DD-KF  in monitoring and forecasting of weather systems and ocean currents. \end{abstract}

\noindent Keywords:
Data Assimilation, Kalman Filter,  Domain Decomposition, Filter Localization, Model Reduction, Numerical Algorithm.\\
AMS:35Q93,49M27, 49M41, 65K15, 65M32

%%%%%%%%%%%%%%%%%%%%%%%%%%%%%%%%%%%%%
%%%%%%%%%%%%%%%%%%%%%%%%%%%%%%%%%%%%%
\section{Introduction}
\label{sec:1}

\noindent In the present paper we focus on Kalman Filter (KF) which is one of the most important and common  state estimation algorithms solving Data Assimilation (DA) problems \cite{Kalman,Kalnay}. Besides its employment in  validation of  mathematical models used in meteorology, climatology, geophysics, geology and hydrology \cite{Wu}, it  has become a main component in visual tracking of moving object in image 
 processing or   autonomous vehicles tracking with GPS in satellite navigation systems, or even in applications of non physical systems such as  financial markets \cite{Karavasilis,Benhamou,Heidari,Wu}.    Its main strength is the simple derivation of the algorithm using the filter recursive property:  measurements/observations are processed step by step as they arrive allowing to correct the given data. A recent review of KF and its applications is given in \cite{Kim}.\\
\noindent KF  time complexity
requires very large computational burden in concrete scenarios  \cite{Lyster,Tang,Tossavainen}.  Hence, several variants/approximation  have been proposed to reduce the computational complexity and to allow KF deployment in real time. These are  designed on the basis
of a reduction in the order of the  model  \cite{Hahnel,Hannachi,Rozier,Wikle},  or they are based on { Ensemble Kalman Filter (EnKF)}  which approximates KF by representing the distribution of the state with an ensemble of draws from that distribution. A prediction of the error at a future time is computed by integrating each ensemble state independently by the model \cite{Evensen}. EnKF can be used for nonlinear dynamics settings and they are  more suited to large scale real world problems than the KF, so it is more viable for use within operational DA \cite{Sandu}. Nevertheless, EnKF methods are hindered by a reduction in the size of the ensemble due to
the computational requirements of maintaining a large ensemble \cite{Anderson,Hamill}.  In applications, most efforts to deal with the related problems of computational effort and sampling error in ensemble estimation have focused on  using variations on the concept of localization \cite{Zhou}.\\
\noindent A typical approach for solving computationally intensive problems - which is  oriented to exploit parallel computing - is based on Schwarz Domain Decomposition (DD) techniques. DD methods are  well-established strategies that has been used with great success for a wide variety of problems. Review papers, both for mathematical and computational analysis are for instance  \cite{Meurant,Quarteroni,Saad,Schwarz}. Concerning DA problems, DD is usually performed by discretizing the objective function and the model to build a discrete Lagrangian \cite{DD-DA}. At the heart of these so called all-at-once approaches,  lies  solution of a very large linear system (the Karush-Kuhn-Tucker (KKT) system) which is  decomposed according to the discretization grid. 
A common drawback of such parallel algorithms is their limited scalability, due to the fact that parallelism is achieved adapting  the most computationally demanding tasks  for parallel execution. Concerning KF  this approach leads to the straightforward data parallelism inside the loop over time-steps.  
Amdhal's Law clearly applied in these situations because the computational cost of the components that are not parallelized - or in case of KF a data synchronization at each step -  provides a lower bound on the execution time of the parallel algorithm.  More generally if a fraction $r$  of our serial program remains unparallelized, then
Amdahl's Law says we can't get a speedup better than  $1/r$. Thus even if r is quite small, say 1/100, and we have a system with thousands of cores, we can't get a speedup better than 100.

\noindent Here we present the mathematical framework of an innovative approach which turns to be appropriate for using state estimation problems described by KF and governed by a dynamic model such as Partial Differential Equations (PDEs).  This approach  relies on the ideas of  Schwarz DD  \cite{Schwarz} and Parallel in Time (PinT) approaches \cite{Lions} fully revised in a linear algebra setting.  Schwarz methods  use as boundary conditions of the local PDE-model the approximation of the numerical solution computed on the interfaces between adjacent spatial subdomains. In order to introduce a consistent DD along the time direction,  PinT methods use a coarse/global/predictor propagator to obtain approximate initial values of local models on the coarse time-grid;  a fine/local/corrector solver to obtain the solution of local models starting from the approximate initial values; 
 an iterative procedure to smooth out the discontinuities of the global model on the overlapping domains. 
Nevertheless, one of the key limitation  of any PinT-based methods is data dependencies of the local solvers from the coarse solver: the coarse solver must always be executed serially for the full duration of the simulation and local solver have to wait for the approximate initial values provided by the coarse solver. We  use the KF solution  as coarse predictor for the local PDE-based  model, providing the approximations needed  for locally solving  the initial value problems on each time subinterval,  whereas the PDE model serves as a fine description which locally improves the coarse one, on each time interval in order to iteratively improve the first prediction.  \\
 We underline that the idea of partitioning the spacial domain into sub-domains and use KF on these subdomains for a fixed time is quite old, as it goes back to Lorenc in 1981 \cite{Lorenc1981}. This is essentially the idea underlying domain localization which has been   deeply investigated in applications (see for instance \cite{Brankart,Brusdal,Haugen,Nerger2006,Ott}). Because the assimilations are performed independently in each local region, two neighboring subdomains might produce strongly different analysis estimates when the assimilated observations have
gaps. Our approach instead  resolves both the inherent bottleneck of time--marching solvers and the smoothness problem of the analysis fields. 

\noindent We emphasize that there is a quite different rationale behind such  DD framework and the so called Model Order Reduction  (MOR) methods, even though they are closely related each other.  
The primary motivation of DD methods based on  Additive Schwarz Method (ASM)  was the inherent parallelism arising from a flexible, adaptive  and independent decomposition of the given problem into several sub problems, though they can also reduce the complexity of sequential solvers. ASM  algorithms and theoretical frameworks  are, to date, the most mature for this class of problems. 
MOR techniques are based on projection of the full order model onto a lower dimensional space spanned by a reduced order basis. These methods has been used extensively in a variety of fields for efficient simulations of highly intensive computational problems. But all numerical issues  concerning the quality of approximation still are of paramount importance \cite{Petzold}.
As  previously mentioned, DD-KF  framework makes it natural to switch from a full scale solver to a model order reduction solver for solution of subproblems for which no relevant low-dimensional reduced space should be constructed. In the same way, DD-KF framework leads to the model decomposition in space and time which is coherent with the filter localization. In conclusion,  main advantage of the DD-KF  is to combine in one theoretical framework,  model decomposition, along the space dimension and time steps, and filter itself, while  providing a flexible,  adaptive, reliable and robust decomposition.  

\noindent Summarizing,  we partition initial domain  along space dimension and time steps,  then we extend each subdomain to overlap its neighbors by an amount; partitioning can be adapted according to  the availability of measurements and data \cite{DyDD}. Accordingly, we decompose forecasting model both in space and time. In particular, it means that as initial and boundary values of local forecasting  models, according to Parallel in Time and Schwarz domain decomposition idea, we  use  estimates provided by  KF  itself at previous step in the adjacent space-time interval, as soon as these are computed.  On each subdomain we formulate a local KF problem analogous to the original one, defined on local  models. In order to enforce the matching of local solutions on  overlapping regions, local KF problems are slightly modified by adding a correction term, acting as a smoothness-regularization constraint on local solutions, which keeps track of contributions of adjacent domains to  overlapping regions; the same correction is applied to covariance matrices,  thereby improving the conditioning of the error covariance matrices.  \\
    To the best of our knowledge, such  an ab-initio decomposition of KF along space and time steps has never been investigated before. A spatially distributed KF into sensor based reduced-order models, implemented on a sensor networks where multiple sensors are mounted on a robot platform for target tracking, is presented in \cite{Battistelli,Khan}. A new algorithm only using domain localization in Extended Kalman Filter is proposed in \cite{Janic}.   

\noindent We would like to mention that in the present work we derive and discuss main features of DD-KF framework using the Shallow Water Equations (SWEs) which are commonly used for monitoring and forecasting the water flow
in rivers and open channels (see for instance \cite{Rafiee,Tirupachi}).  Nevertheless, as we follow the first discretize then optimize approach, Schwarz and PinT methods are completely revisited in the linear algebra setting of this framework, allowing its application in the plentiful  literature of  state estimation real-world problems. We observe that the same authors have presented a simplified version of DD-KF in \cite{PPAM2019}, where KF was used for solving  CLS (Constrained Least Square) models,
seen as  prototype model of Data Assimilation problems. CLS is obtained combining two overdetermined linear systems, representing the state and the
observation mapping. \\
\noindent The rest of the article is organized as follows. In section \S 2, we review preliminaries on KF then in section \S 3 we describe   DD-KF framework and its related algorithm. In section 4, we define the mathematical framework underlying DD-KF. In section \S 5, we theoretically prove its reliability  while  validation on the Shallow Water Equations are given in section \S 6.  Conclusions and possible extensions are provided in section \S 7. Finally, with the aim of allowing replicability of  experimental results section \S 8 contains  the whole detailed derivation of DD-KF method on  SWEs and performance analysis of DD-KF w.r.t. KF.

\normalsize

%Due to all the linear algebra operations involved in equations (\ref{stimapredetta})-(\ref{stimapredetta1}), the standard formulation of KF becomes computationally intractable for solving large scale  problems in real time  feedback rates (see Table \ref{tab2}). 
%In  next section, we present a new  formulation of  KF using DD approach, namely  the Alternating Schwarz Method (ASM), which strongly reduces time complexity of KF.  We partition  ab-initio the whole problem,  including the filter equations and the  predicted model and  the whole domain,  including  space-time directions.   

\section{Preliminaries on Kalman Filter}
Given $x_{0}\in \mathbb{R}^{N}$, let $x(t)\in \Omega \subset \mathbb{R}^{N}$, where $\forall t \in \Delta$, denote the state of a dynamic system governed by the mathematical model $\mathcal{M}_{t,t+\Delta t}( x(t))$, such that:
\begin{equation}\label{modello}
\left\{\begin{array}{ll}
x(t+ \Delta t)&=\mathcal{M}_{t, t+ \Delta t}(x(t)), \\
x(0)&=x_{0}
\end{array}, \right.
\end{equation}  
and let:
\begin{equation}\label{osservazioni}
y(t+\Delta t)=\mathcal{H}_{t+\Delta t}[x(t+\Delta t)],
\end{equation}
denote the so called observations, where 
\begin{equation}
\mathcal{H}_{t+\Delta t}: \ x(t+\Delta t) \in \mathbb{R}^{n}\mapsto y(t+\Delta t) \in \mathbb{R}^{d}\,, \quad d \in \mathbb{N} \quad d <<N \, ,
\end{equation}
denotes the observations mapping including transformations and grid interpolations.  Here we may assume that both $\mathcal{M}$  and $\mathcal{H}$ are non linear. \\
In the context of DA methods, KF aims to bring the state $H(x(t))$ as close as possible to the measurements/observations $y(t)$. One can do this by discretizing  then optimize or first optimize and then discretize. Here, with the aim of making the best use of Schwarz and PinT methods in a linear algebra settings,  we  first discretize then optimize. \\ \noindent Let $r\in \mathbb{N}$ be given. We consider $r+2$ points in $\Delta$, let us say $\{t_{k}\}_{k=0,1,\ldots,r+1}$,  where $t_{k}=k\Delta t$ and $\Delta t= \frac{T}{r+1}$ (more in general, the mesh $t_k$ need not be uniform). \\ \noindent  We use the following setup of  KF: 
\noindent   $x_{k}\equiv x(t_{k})$,  state  at $t_{k}$; 
$ x_{0}$:  state at  $t_{0}\equiv 0$ (or the vector of initial conditions);
$M_{k,k+1}\in \mathbb{R}^{N\times N}$:  discretization of the tangent linear operator (Jacobian) of $\mathcal{M}_{t,t+\Delta t}( x(t))$;
$m\in \mathbb{N}$:  number of observations;
$y_{k}\equiv y(t_{k})\in \mathbb{R}^{m\cdot d}$:  observations vector;
$b_{k}\in \mathbb{R}^{ N}$: input control vector;
$H_{k+1}\in \mathbb{R}^{m\cdot d \times N}$:   discretization of tangent linear approximation of $\mathcal{H}_{t_{k+1}}$ (also called observations operator)  with $N>m$, for $k=0,1,\ldots,r$;
$w_{k}\in \mathbb{R}^{n}$ and $v_{k}\in \mathbb{R}^{m\cdot d}$: model and observation additive errors with normal distribution and zero mean;
%such that $E[w_{k}v_{i}^{T}]=0$, for $i,k=0,1,\ldots,r+1$,
$Q_{k}\in \mathbb{R}^{N\times N}$ and $R_{k}\in \mathbb{R}^{m\cdot d \times m\cdot d}$: covariance matrices of the errors on the model and on the observations, such that 
\begin{equation}
Q_{k}:=E[w_{k}w_{k}^{T}] \quad R_{k}:=E[v_{k}v_{k}^{T}] \quad \textit{$\forall$ $k=0,1,\ldots,r+1$}.
\end{equation}
\noindent where $E[\cdot]$ denotes the expected value, $\forall k=0,1,\ldots,r$.  These matrices are symmetric and positive definite.

\noindent KF consists in the iterative  estimate  of  $x_{k+1}\in \mathbb{R}^{n}$:
\begin{equation}\label{sistema_kalmandiscreto}
x_{k+1}=M_{k,k+1}x_{k}+b_{k}+w_{k}, \quad 
\end{equation}
such that 
\begin{equation}\label{problema1}
y_{k+1}=H_{k+1}{x}_{k+1}+v_{k+1}\quad .
\end{equation}
An example of the vector $b_k$ is given for the Shallow Water Equations described in the Appendix (see (\ref{b[1]})).\\

\noindent  
If $P_{0}\in \mathbb{R}^{n\times n}$ and  $\widehat{x}_{0}:= x_{0}$, each step of  KF  is composed by two main operations \cite{sorenson}.
\begin{itemize}
\item[(i)] Predictor phase, involving computation of  state estimate:
\begin{equation}\label{stimapredetta}
x_{k+1}=M_{k,k+1}\widehat{x}_{k}+b_{k};
 \end{equation}
computation of  covariance matrices: 
\begin{eqnarray}\label{predictedmatrix}
P_{k+1}=&M_{k,k+1}P_{k}M_{k,k+1}^{T}+Q_{k}\\ \nonumber
S_{k+1}=&H_{k+1}P_{k+1}H_{k+1}^{T}+R_{k+1} 
\end{eqnarray}
and computation of   KF gain:
\begin{equation}\label{guadagnodikalman}
K_{k+1}=P_{k+1}H_{k+1}^{T}S_{k+1}^{-1}.
\end{equation}
\item[(ii)] Corrector phase, involving 
the update of covariance matrix:
\begin{equation}\label{updateP}
P_{k+1}=(I-K_{k+1}H_{k+1})P_{k+1},
\end{equation}
and the update of  state estimate:
\begin{equation}\label{stimapredetta1}
\widehat{x}_{k+1}=x_{k+1}+K_{k+1}(y_{k+1}-H_{k+1}x_{k+1}).
\end{equation}
\end{itemize}

\section{DD-KF}
 Let  $D_{n}(\Omega)=\{{x}_{j}\}_{j=1,\ldots,n}$, be the mesh partitioning of $\Omega$ and  $b_c\in \mathbb{N}$ such that  $D_{b_c}(\partial \Omega)=\{{z}_{p}\}_{p=1,\ldots,b_c}$ be the discretization of $\partial \Omega$. If we let $x_{k+1}^{\partial \Omega}\in \mathbb{R}^{b_c}$ to be the state at  $t_{k+1}$ on $\partial \Omega$, it is
\begin{equation}
x_{k+1}^{\partial \Omega} \equiv x^{\partial \Omega}(t_{k+1})\in \mathbb{R}^{b_c}.
\end{equation} 
\noindent DD-KK  consists of  two main steps, namely domain decomposition and model decomposition. In the following, for simplicity of notation, and without loss of generality  we consider two spatial subdomains, while  the general case involving more than two subdomains will be briefly described in section 3.1.1. \\[.1cm]

\noindent \textbf{DD in space}.  Let $\Omega_{1}$, $\Omega_{2}\subset \Omega$ such that
\begin{equation}\label{subdomains_space}
\Omega= \Omega_{1}\cup \Omega_{2}
\end{equation}
with $D_{n_{1}}(\Omega_{1}),D_{n_{2}}(\Omega_{2}) \subset D_{n}(\Omega)$, where $n_{i}<n$ are such that 
\begin{equation}\label{disc_1e2}
D_{n_{1}}(\Omega_{1})=\{x_{i}\}_{i\in I_{1}} \quad , \quad D_{n_{2}}(\Omega_{2})=\{x_{i}\}_{i\in I_{2}}
\end{equation}
and  $n_{1}=|D_{n_{1}}(\Omega_{1})|$, $n_{2}=|D_{n_{2}}(\Omega_{2})|$; let 
\begin{equation}\label{overlap_domain}
\Omega_{1,2}:=\Omega_{1}\cap \Omega_{2}, 
\end{equation}
be the overlap region where we introduce 
\begin{equation}\label{disc12}
D_{s}(\Omega_{1,2}):=D_{n_{1}}(\Omega_{1})\cap D_{n_{2}}(\Omega_{2})=\{x_{i}\}_{i\in I_{1,2}}
\end{equation}
denoting the discretization of $\Omega_{1,2}$ and
\begin{equation}\label{set_indici}
I_{1}=\{1,\ldots,n_{1}\} ,\quad I_{2}=\{n_{1}-s+1,\ldots,n\}, \quad I_{1,2}=\{n_{1}-s+1,\ldots,n_{1}\}
\end{equation}
are the corresponding index sets, with $s\in \mathbb{N}_{0}$.\\

\noindent \textbf{DD in time}. Let  $\Delta_{j}\subset \Delta$ such that
\begin{equation}
\Delta=\cup_{j=1}^{L} \Delta_{j}\,.
\end{equation}
Let $D_{s_{j}}(\Delta_{j}) \subset D_{r+2}(\Delta)$ be  such that 
\begin{equation}\label{discreto_tempo}
D_{s_{j}}(\Delta_{j})=\{t_{k}\}_{k=\bar{s}_{j-1},\ldots,\bar{s}_{j-1}+s_{j}}
\end{equation}
denoting the discretization of $\Delta_j$, 
where in (\ref{discreto_tempo})
the quantity 
\begin{equation}\label{essesign}
    \bar{s}_{j-1}:=\sum_{l=1}^{j-1}(s_{l}-s_{l-1,l}), \quad  s_{j-1,j}\in \mathbb{N}_{0}
\end{equation} is the number of elements in common between  subsets $\Delta_{j-1}$ and $\Delta_{j}$, and it is such that $s_{0}:= 0$, $s_{0,1}:= 0$ and $\bar{s}_{L-1}+s_{L}:= r+1$,
then
\begin{equation}
\Delta_{j-1,j}:=\Delta_{j-1}\cap \Delta_{j}, 
\end{equation}
with $$D_{s_{j-1,j}}(\Delta_{j-1,j}):=D_{s_{j-1}}(\Delta_{j-1})\cap D_{s_{j}}(\Delta_{j})$$ denoting the discretization of the overlapping time interval. \\

%\begin{table}[ht!]
%\caption{Configuration of DD -KF}
%\label{tab3}
%\begin{tabular}{|l|}
%\hline
%\textbf{Begin of DD Step}: \\
%$\Delta\leftarrow (\Delta_{1},\Delta_{2},\ldots,\Delta_{L})$  \%decomposition in time; 
%$\Omega \leftarrow (\Omega_{1},\Omega_{2})$           \%decomposition in space\\
%\textbf{End of DD Step}\\
%\textbf{Begin of DD-KF procedure}:\\
%\textbf{for} $j=1,\ldots,L$ \%  localization in time\\
%\textbf{for} $i=1,2$ \% localization in space\\
%\textbf{Compute}  DD-KF estimates\\
%\textbf{enfor} \\
%\textbf{Collect}  DD-KF estimates\\
%\textbf{End of DD-KF procedure}\\
%\hline
%\end{tabular}
%\end{table}
\noindent The innovative idea is to split the outer loop of KF (let us say, the time - step loop along $k=1, \ldots, r$)  in $L$ parts each one running in $\Delta_j$ (where $j=1, \ldots,L$) as described below. \\

\noindent  For $j=1, \ldots, L$, 
  let us indicate by  $\widehat{x}_{k+1}^{\Delta_j}$, the KF solution  such that:
\begin{equation}
\widehat{x}_{k+1}^{\Delta_{j}}=M_{k,k+1}\widehat{x}_{k}^{\Delta_{j}}+b_{k}+w_{k}, \quad \forall k=\bar{s}_{j-1},\ldots,\bar{s}_{j-1}+s_{j}-1
\end{equation}
where 
\begin{equation}\label{cond_ini}
\widehat{x}_{\bar{s}_{j-1}}^{\Delta_{j}}=\widehat{x}_{\bar{s}_{j-1}}^{\Delta_{j-1}}
\end{equation} 
is the  initial value  such that (according to DA condition):
\begin{equation}
y_{k+1}=H_{k+1}\widehat{x}_{k+1}^{\Delta_{j}}+v_{k+1},
\end{equation}
in particular it is $\widehat{x}_{0}^{\Delta_{0}}:= x_{0}$. We note in (\ref{cond_ini}) that according to  PinT compatibility conditions initial value at $\Delta_j$ is  the estimate computed at previous step in $\Delta_{j-1}$. \\

\noindent We now consider the following block partitioning of   $M:= M_{k,k+1}\in \mathbb{R}^{n\times n}$ and  $H_{k+1}\in \mathbb{R}^{m\cdot d \times n}$: 
\begin{equation}\label{noverlapM}
M=\left[\begin{array}{c|c}
M_{1} & M_{1,2}\\
\hline
M_{2,1} & M_{2}
 \end{array} \right],
\end{equation}
and 
\begin{equation}\label{noverlap-H}
    H_{k+1}=\left[H_{k+1}^{1} \ H_{k+1}^{2}\right].
\end{equation}

\noindent In particular, 
\begin{itemize}
\item  if $s=0$ (no overlap), for $i,j=1,2$:
\begin{equation}\label{deco_M}
M_{i}:=M|_{I_{i}\times I_{i}}\in \mathbb{R}^{n_{i}\times n_{i}}\quad M_{i,j}:=M|_{I_{i} \times I_{j}}\in \mathbb{R}^{n_{i}\times n_{j}},
\end{equation}
    and 
    \begin{equation}
        H_{k+1}^{1}:=H_{k+1}|_{I_{1}}\in \mathbb{R}^{m\cdot d \times n_{1}}, \quad H_{k+1}^{2}:=H_{k+1}|_{I_{2}}\in \mathbb{R}^{m\cdot d \times n_{2}},
    \end{equation}

\item while if $s\neq 0$ (in presence of overlapping region), we have:
\begin{displaymath}
\begin{array}{lllllll}
\mathbb{M}_{1,1}&:=M|_{\tilde{I}_{1} \times \tilde{I}_{1}}\in \mathbb{R}^{(n_{1}-s)\times (n_{1}-s)}, \quad
\mathbb{M}_{1,2}&:=M|_{\tilde{I}_{1} \times {I}_{1,2} }\in \mathbb{R}^{(n_{1}-s)\times s},\\
\mathbb{M}_{1,3}&:=M|_{\tilde{I}_{1} \times \tilde{I}_{2} }\in \mathbb{R}^{(n_{1}-s)\times (n_{2}-s)},\quad

\mathbb{M}_{2,1}&:=M|_{{I}_{1,2} \times \tilde{I}_{1}  }\in \mathbb{R}^{s\times (n_{1}-s)},\\
\mathbb{M}_{2,2}&:=M|_{{I}_{1,2} \times {I}_{1,2} }\in \mathbb{R}^{s \times s}, \quad \ \ \ \ \ \ \ \ \ \ \
\mathbb{M}_{2,3}&:=M|_{{I}_{1,2}\times \tilde{I}_{2} }\in \mathbb{R}^{s\times (n_{2}-s)},\\

\mathbb{M}_{3,1}&:=M|_{ \tilde{I}_{2} \times \tilde{I}_{1} }\in \mathbb{R}^{(n_{2}-s)\times (n_{1}-s)}, \quad
\mathbb{M}_{3,2}&:=M|_{ \tilde{I}_{2} \times {I}_{1,2} }\in \mathbb{R}^{(n_{2}-s)\times s}, \\
\mathbb{M}_{3,3}&:=M|_{ \tilde{I}_{2} \times \tilde{I}_{2}  }\in \mathbb{R}^{(n_{2}-s)\times (n_{2}-s)}.
\end{array}
\end{displaymath}

Then  in (\ref{noverlapM}) we have:  \begin{equation}\label{deco_M2}
\begin{array}{lllllllllll}
M_{1}:=\left[\begin{array}{cc}
\mathbb{M}_{1,1} & \mathbb{M}_{1,2}\\
\mathbb{M}_{2,1}  & \mathbb{M}_{2,2} 
 \end{array} \right], \quad {M}_{1,2}:=\left[\begin{array}{cc}
0 & \mathbb{M}_{1,3}\\
0  & \mathbb{M}_{2,3} 
 \end{array} \right],\\
{M}_{2,1}:=\left[\begin{array}{cc}
\mathbb{M}_{2,1} & 0\\
\mathbb{M}_{3,1}  & 0 
 \end{array} \right], \quad M_{2}:=\left[\begin{array}{cc}
\mathbb{M}_{2,2} & \mathbb{M}_{2,3}\\
\mathbb{M}_{3,2}  & \mathbb{M}_{3,3} 
 \end{array} \right],
 \end{array}
\end{equation}
and 
\begin{equation}\label{mat_H}
    H_{k+1}^{1}:=\left[H_{k+1}^{1,1} \ \ \alpha H_{k+1}^{1,2}\right], \quad  H_{k+1}^{2}:=\left[\beta H_{k+1}^{1,2} \  \  H_{k+1}^{2,2}\right] ,
\end{equation}
 with:
\begin{equation}
    H_{k+1}^{1,1}:=H_{k+1}|_{\tilde{I}_{1}}, \quad H_{k+1}^{2,2}:=H_{k+1}|_{\tilde{I}_{2}}, \quad H_{k+1}^{1,2}:=H_{k+1}|_{{I}_{1,2}},  
\end{equation}
where 
\begin{equation}\label{set_indici_tilde}
\tilde{I}_{1}:=I_{1} \setminus  {I}_{1,2}, \quad \tilde{I}_{2}:=I_{2} \setminus  {I}_{1,2},
\end{equation}
and $n_{1}-s=|\tilde{I}_{1}|$, $n_{2}-s=|\tilde{I}_{2}|$.  We note that in (\ref{mat_H}), in presence of overlapping region, it should be $ \alpha H_{k+1}^{1,2}+ \beta H_{k+1}^{1,2}= H_{k+1}|_{{I}_{1,2}}$, so we require that  $\alpha + \beta = 1$.
\end{itemize}

\noindent We highlight that  DD-KF  employs  a block partitioning of $M$ as in the Additive  Schwarz DD, while as in the PinT approach, initial value of  restricted models are the KF estimates computed  at previous step.     In particular, at $t_{0}\equiv 0$, DD-KF in $\Delta_{1}$ employs as initial value $x_{0}\equiv \widehat{x}_{0}$ which is the initial condition of the dynamic model appropriately restricted to  $\Omega_{1}$ or $\Omega_{2}$.\\ % This means that  problems $P_{\Omega_{1}}^{\Delta_{j}}$ in (\ref{subproblemsp1}) and  $P_{\Omega_{2}}^{\Delta_{j}}$ in (\ref{subproblemsp2}) are independent.\\

\noindent In Fig. \ref{fig_DD} we report a schematic description of how  DD-KF trajectory is decomposed  in two spatial subdomains and six time subdomains. 
\begin{center}
\begin{figure}[h!]
%\centering
{\includegraphics[width=1.2\textwidth]{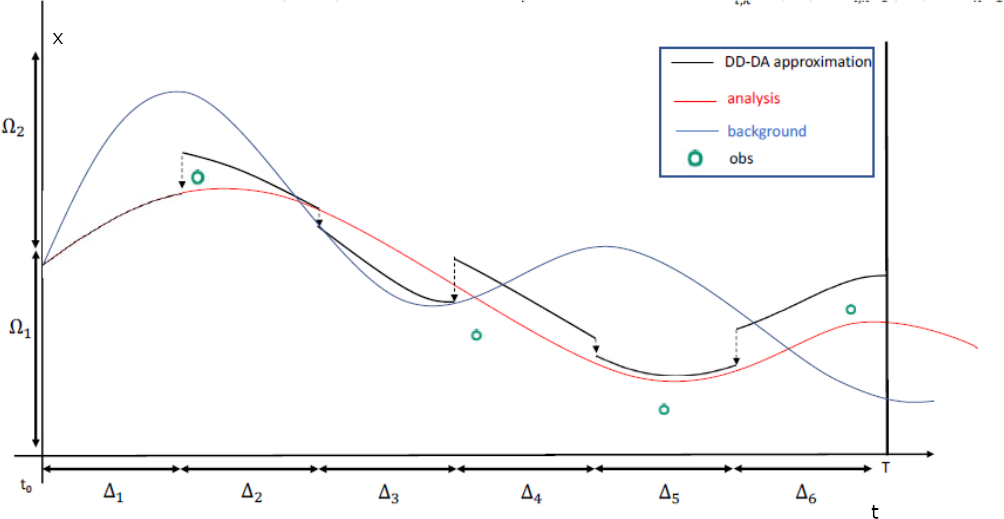}}
\caption{Schematic description of DD-KF solution trajectory for $N=1$, that is $\Omega $ is a subset of $\Re$ and is splitted in two sub domains (represented along the x-axis), while the time interval   is splitted in six sub intervals (represented along the t-axis).}
\label{fig_DD}
\end{figure}
\end{center}
\noindent That said, we now consider local KF problems in  $\Delta_{j}\subset \Delta$.  \\
\noindent \begin{definition}
(Local KF problems)
\noindent KF problems in  $\Delta_{j} \times \Omega_1$ and $\Delta_{j} \times \Omega_2$ are, $\forall \, k=\bar{s}_{j-1},\ldots,\bar{s}_{j-1}+s_{j}-1$, the following:
\begin{equation}\label{subproblemsp1}
P_{\Omega_{1}}^{\Delta_{j}}: \ \ \left\{ \begin{array}{ll}
 x_{1,k+1}^{\Delta_{j}}=M_{1}x_{1,k}^{\Delta_{j}}+b_{k}|_{I_{1}}+b_{1,k}+w_{k}|_{I_{1}}\\
y_{k+1}=H_{k+1}^{1}{x}_{1,k+1}^{\Delta_{j}}+H_{k+1}^{2}{x}_{2,k+1}^{\Delta_{j}}+v_{k+1}
\end{array}\right. 
\end{equation}
and 
\begin{equation}\label{subproblemsp2}
P_{\Omega_{2}}^{\Delta_{j}}: \ \             \left\{ \begin{array}{ll}
x_{2,k+1}^{\Delta_{j}}=M_{2,1}x_{1,k}^{\Delta_{j}}+b_{k}|_{I_{2}}+b_{2,k}+w_{k}|_{I_{2}}\\
y_{k+1}=H_{k+1}^{1}{x}_{1,k+1}^{\Delta_{j}}+H_{k+1}^{2}{x}_{2,k+1}^{\Delta_{j}}+v_{k+1}
\end{array}, \right.
\end{equation}
with initial states 
\begin{equation}\label{initial_states}
x_{1,\bar{s}_{j-1}}^{\Delta_{j}}=x_{1,\bar{s}_{j-1}}^{\Delta_{j-1}}\quad  x_{2,\bar{s}_{j-1}}^{\Delta_{j}}=x_{2,\bar{s}_{j-1}}^{\Delta_{j-1}}.
\end{equation}
In particular it is $x_{1,\bar{s}_{0}}^{\Delta_{0}}:=x_{0}|_{I_{1}}$, $x_{2,\bar{s}_{0}}^{\Delta_{0}}:=x_{0}|_{I_{2}}$ and  
\begin{equation}
\begin{array}{ll}
x_{1,k}^{\Delta_{j}}|_{\partial \Omega_{1} \setminus \Omega_{2}}={x}_{k}^{\partial \Omega_{1} \setminus \Omega_{2}}\\
x_{1,k}^{\Delta_{j}}|_{\Gamma_{1}} = x_{2,k}|_{\Gamma_{1}}
  \end{array}\quad 
  \begin{array}{ll}
x_{2,k}^{\Delta_{j}}|_{\partial \Omega_{2} \setminus  \Omega_{1}}={x}_{k}^{\partial \Omega_{2} \setminus \Omega_{1}}\\
x_{2,k}^{\Delta_{j}}|_{\Gamma_{2}}  = x_{1,k}|_{\Gamma_{2}} 
  \end{array}
\end{equation}
where
\begin{equation}
    \Gamma_{1}:=\partial \Omega_{1}\cap \Omega_{2} \quad, \quad \Gamma_{2}:=\partial \Omega_{2}\cap \Omega_{1}\\
\end{equation}
and 
\begin{equation}
    b_{1,k}:=\left[\begin{array}{ll} \mathbb{M}_{1,3}\\ \mathbb{M}_{2,3}\end{array}\right]x_{2,k}^{\Delta_{j}}|_{\Gamma_{1}} \quad b_{2,k}:=\left[\begin{array}{ll} \mathbb{M}_{2,1}\\ \mathbb{M}_{3,1}\end{array}\right]x_{1,k}^{\Delta_{j}}|_{\Gamma_{2}}.
\end{equation}
$H_{k}^{1}\in \mathbb{R}^{m\cdot d \times n_{1}}$ and $H_{k}^{2}\in \mathbb{R}^{m \cdot d \times n_{2}}$ are defined in (\ref{mat_H}) and,  for $i=1,2$, $x_{\bar{s}_{j-1}}^{\Delta_{j-1}}|_{I_{i}}$ and $w|_{I_{i}}$ are  reductions to  $I_{i}$ of  $\widehat{x}_{0}\in \mathbb{R}^{n}$ and $w\in \mathbb{R}^{n}$.
\noindent As a consequence,  the errors $$e_{1,k+1}:=(x_{1,k+1}-x_{1,k})\in\mathbb{R}^{n_{1}}, \quad  e_{2,k+1}:=(x_{2,k+1}-x_{1,k})\in\mathbb{R}^{n_{2}}$$ defined  in $\Omega_{1}$ and $\Omega_{2}$,  give the local estimate of  $$P_{i,j}:=Cov(e_{i,k+1},e_{j,k+1})$$ with  $i\neq j$.
Similarly, 
\begin{equation}
\begin{array}{ll}
Q_{k}&=diag(Q_{1,k},Q_{2,k})\in \mathbb{R}^{n\times n}\\ 
\end{array}
\end{equation}
and $D_{k}\in \mathbb{R}^{m\cdot d\times m\cdot d}$ are local covariance  matrices of  errors on model and on  observations where,  $Q_{1,k}\in \mathbb{R}^{n_{1}\times n_{1}}$ and $Q_{2,k}\in \mathbb{R}^{n_{2}\times n_{2}}$, while $D_{k}$ is a diagonal matrix.

\end{definition}
\noindent We note that the local aspect of the KF problem given in Definition 2 comes out from both the local contribution of the model and from the filter localization.

\subsection{Local problems}
In the following we let  $\Delta_{j}$ be fixed, then for simplicity of notation, we refer to $\widehat{x}_{i,k}^{\Delta_{j}}$ omitting superscript $\Delta_{j}$.
  $\forall k=\bar{s}_{j-1},\ldots,\bar{s}_{j-1}+s_{j}-1$ local problems are made of:
\begin{itemize}
\item {\bf Predictor phase}. Computation of  state estimates:
\begin{equation}\label{stimapredetta_DD}
\begin{array}{ll}
x_{1,k+1}=M_{1}\widehat{x}_{1,k}+b_{k}|_{I_{1}}+b_{1,k}+w_k|_{I_{1}}\\
x_{2,k+1}=M_{2}\widehat{x}_{2,k}+b_{k}|_{I_{2}}+b_{2,k}+w_k|_{I_{2}}
\end{array};
 \end{equation}
 where 
\begin{equation}\label{b_k}
    b_{1,k}:=\left[\begin{array}{ll} \mathbb{M}_{1,3}\\ \mathbb{M}_{2,3}\end{array}\right]\widehat{x}_{2,k}|_{\Gamma_{1}} \quad b_{2,k}:=\left[\begin{array}{ll} \mathbb{M}_{2,1}\\ \mathbb{M}_{3,1}\end{array}\right]\widehat{x}_{1,k}|_{\Gamma_{2}};
\end{equation}
computation of  the local estimate of 
\begin{equation}\label{predictedmatrix_DD}
\begin{array}{ll}
P_{1}=M_{1}P_{1}M_{1}^{T}+P_{\Omega_{1}\leftrightarrow \Omega_{2}}+Q_{1,k}\\
P_{2}=M_{2}P_{2}M_{2}^{T}+P_{\Omega_{2}\leftrightarrow \Omega_{1}} +Q_{2,k}
\end{array},
\end{equation}
where
\begin{equation}\label{matP_in}
\begin{array}{ll}
P_{\Omega_{1}\leftrightarrow \Omega_{2}} =M_{1,2}P_{2,1}M_{1}^{T}+M_{1}P_{1,2}M_{1,2}^{T}+M_{1,2}P_{2}M_{1,2}^{T}\\
P_{\Omega_{2}\leftrightarrow \Omega_{1}} =M_{2,1}P_{1,2}M_{2}^{T}+M_{2}P_{2,1}M_{2,1}^{T}+M_{2,1}P_{1}M_{2,1}^{T}
\end{array},
\end{equation}
with
\begin{equation}\label{matP}
\begin{array}{ll}
P_{1,2}=M_{1}P_{1,2}M_{2}^{T}+C_{\Omega_{1}\leftrightarrow \Omega_{2}}\\
P_{2,1}=M_{2}P_{2,1}M_{1}^{T}+C_{\Omega_{1}\leftrightarrow \Omega_{2}}^{T}
\end{array},
\end{equation}
and

\begin{equation}\label{mat_C}
\begin{array}{ll}
C_{\Omega_{1}\leftrightarrow \Omega_{2}} =M_{1}P_{1}M_{2,1}^{T}+M_{1,2}P_{2,1}M_{2,1}^{T}+M_{1,2}P_{2}M_{2}^{T}
\end{array}
\end{equation}
keeping track of contribution of  $\Omega_{1}$ and $\Omega_{2}$ to  the overlapping region.\\

\item {\bf Corrector phase}. Update of   DD-KF gains:
\begin{equation}\label{guadagnodikalman_DD}
\begin{array}{ll}
K_{1}=(P_{1}H_{k+1}|_{I_{1}}^{T}+P_{1,2}H_{k+1}|_{I_{2}}^{T})\cdot F \\
K_{2}=(P_{2}H_{k+1}|_{I_{2}}^{T}+P_{2,1}H_{k+1}|_{I_{1}}^{T})\cdot F
\end{array};
\end{equation}
where
\begin{equation}\label{matF}
    F=(H_{k+1}|_{I_{1}}P_{1}H_{k+1}|_{I_{1}}^{T}+H_{k+1}|_{I_{2}}P_{2}H_{k+1}|_{I_{2}}^{T}+R_{1,2}+R_{k+1})^{-1}
\end{equation}
and 

\begin{equation}\label{R12}
R_{1,2}=(H_{k+1}|_{I_{2}}P_{2,1}H_{k+1}|_{I_{1}}^{T}+H_{k+1}|_{I_{1}}P_{1,2}H_{k+1}|_{I_{2}}^{T})
\end{equation}
  keep track of  contribution of $\Omega_{1}$ and $\Omega_{2}$ to  overlapping region;

 update of:
\begin{equation}\label{P1-P2}
\begin{array}{ll}
P_{1}=(I-K_{1}H_{k+1}|_{I_{1}})P_{1}-K_{1}H_{k+1}|_{I_{2}}P_{2,1}\\
P_{2}=(I-K_{2}H_{k+1}|_{I_{2}})P_{2}-K_{2}H_{k+1}|_{I_{1}}P_{1,2}
\end{array};
\end{equation}
update of  local estimate of  covariance matrices between $e_{1}\in \mathbb{R}^{n_{1}}$ and $e_{2}\in \mathbb{R}^{n_{2}}$
\begin{equation}\label{P12-P21}
\begin{array}{ll}
P_{1,2}=(I-K_{1}H_{k+1}|_{I_{1}})P_{1,2}-K_{1}H_{k+1}|_{I_{2}}P_{2}\\
P_{2,1}=(I-K_{2}H_{k+1}|_{I_{2}})P_{2,1}-K_{2}H_{k+1}|_{I_{1}}P_{1}
\end{array}.
\end{equation}
Finally, we get to the update of  DD-KF estimates:
\begin{equation}\label{stimastato_DD}
\begin{array}{ll}
\widehat{x}_{1,k+1}=x_{1,k+1}+K_{1}\left[y_{k+1}-(H_{k+1}|_{I_{1}}x_{1,k}+H_{k+1}|_{I_{2}}x_{2,k})\right] \\
\widehat{x}_{2,k+1}=x_{2,k+1}+K_{2}\left[y_{k+1}-(H_{k+1}|_{I_{1}}x_{1,k}+H_{k+1}|_{I_{2}}x_{2,k})\right]
\end{array}.
\end{equation}
\end{itemize}

\noindent We underline that if  observations are concentrated in spatial subdomains such that the first $m_{1}$ and the last $m_{2}$ are in  $\Omega_{1}$ and $\Omega_{2}$, respectively,  then at  step $k+1$  these observations influence the first $n_{1}$ and the last $n_{2}$ components of DD-KF estimates $\widehat{x}_{1,k+1}$ and $\widehat{x}_{2,k+1}$, respectively. Consequently,  matrices $H_{k+1}|_{I_{i}}$  have $m-m_{i}$ null rows and then they can be reduced to $m_{i}\times n_{i}$ matrices, with $i=1,2$. \\

\noindent \textbf{Remark} (Filter Localization).  We note that the covariance localization performed by  DD-KF can be obtained using the  P. L. Houtekamer and H. L. Mitchell approach \cite{H_M}. Indeed, P. L. Houtekamer and H. L. Mitchell  made a modification to  EnKF to localize covariances matrices  and consequently the Kalman gain using the Schur product $K_{1}^{l}=[(\rho \circ (P_{k+1}H_{k+1}^{T})][ \rho \circ(H_{k+1}P_{k+1}H_{k+1}^{T})+R_{k+1}]$, where $\rho$ is 
a suitably correlation function.  If we choose   $\rho_{1}:=\mathbb{1}_{\Omega_{1}}$ and $\rho_{2}:=\mathbb{1}_{\Omega_{2}}$, i.e. the indicator functions of  $\Omega_{1}$ and $\Omega_{2}$ respectively, then we get the DD-KF gains $K_{1}$ and $K_{2}$ in (\ref{guadagnodikalman_DD}).
 
\noindent
\subsubsection{DD-KF for more than two subdomains}
 DD-KF  can be generalised to $n_{sub}>2$ spatial subdomains. Starting from the decomposition step in space and time given in section 3,  with $n_{sub}>2$ spatial subdomains consecutively arranged along a one dimensional direction. Such assumption allows to simplify the  management of the contributions of adjacent domains to  overlapping regions. In this case each subdomain overlaps with two domains and two compatibility conditions needs to be satisfied. As more than two overlapping regions occurs  as more compatibility conditions should  be satisfied. Just for simplification of the presentation we assume that   $M_{k,k+1}$ is block tridiagonal (such the matrix in (\ref{section7M}) arising from discretization of  SWEs  described in section 7) then  the decomposition step,   still holds  where instead of (\ref{noverlapM}) and (\ref{noverlap-H}) we will have respectively:
\begin{equation}\label{M_tridiag}
    M_{k,k+1}= \left[\begin{array}{ccc} \begin{array}{ccc} M_1 & M_{1,2} &  \\ M_{2,1}& M_2 &M_{2,3} \end{array} & 0 \\  0 &  \begin{array}{cc} \ddots &  \\ M_{n_{sub},n_{sub}-1}& M_{n_{sub}} \end{array} \end{array}\right]
\end{equation}
and 
\begin{equation}
    H_{k+1}=\left[H_{k+1}^{1} \ H_{k+1}^{2} \ \cdots \ H_{k+1}^{n_{sub}} \right].
\end{equation}

%where $H_{k+1}^{i}:=H_{k+1}|_{I_i}\in \mathbb{R}^{m \cdot d \times n_i}$, $\forall i=1,\ldots,n_{sub}$.

%\noindent For $i=1, \ldots, n_{sub}$, given $P_{i}\in \mathbb{R}^{n_{i}\times n_{i}}$, $P_{i,j}\in \mathbb{R}^{n_{i}\times n_{j}}$  and  initial states
%\begin{equation*}
%\begin{array}{ll}
%    \widehat{x}_{i,\bar{s}_{j-1}}\equiv \widehat{x}_{i,\bar{s}_{j-1}}^{\Delta_{j}}=x_{\bar{s}_{j-1}}^{\Delta_{j-1}}|_{I_{i}}
%        \end{array},
%    \end{equation*}
%    where, in particular, it is: 
%    \begin{equation*}
%\widehat{x}_{i,0}=\widehat{x}_{0}|_{I_{i}}
%\end{equation*}
 %   with local boundary conditions
 %   \begin{equation*}
  %    \widehat{x}_{i,k}|_{\Gamma_{i}}=\widehat{x}_{i+1,k}|_{\Gamma_{i}} \quad
   %   \widehat{x}_{i,k}|_{\Gamma_{i-1}}=\widehat{x}_{i-1,k}|_{\Gamma_{i-1}}
    %\end{equatioi*}
\noindent In the same way of (\ref{subproblemsp1}) and (\ref{subproblemsp2}), we get $n_{sub}$ local problems $P^{\Delta_j}_{\Omega_i}$, in the $i$-th subdomain, for $i=1,\ldots, n_{sub}$.  DD-KF step  at $t_{k+1}\in \Delta_{j}$, $\forall k=\bar{s}_{j-1},\ldots,\bar{s}_{j-1}+s_{j}-1$ provides  solutions of local problems   in the following steps.\\ 
\noindent Computation of state estimates:
\begin{equation*}
    x_{i,k+1}=M_h\widehat{x}_{i,k}+b_k|_{I_i}+b_{i,k}+w_k|_{I_{i}}
\end{equation*}
where 
\begin{equation*}
    b_{i,k}=\left\{\begin{array}{ll}
   M_{i,i+1}\widehat{x}_{i+1,k}|_{\Gamma_i} \quad &\textrm{if $i=1$}\\
   M_{i,i-1}\widehat{x}_{i-1,k}|_{\Gamma_i} \quad &\textrm{if $i=n_{sub}$}\\
 M_{i,i+1}\widehat{x}_{i+1,k}|_{\Gamma_i}+M_{i,i-1}\widehat{x}_{i-1,k}|_{\Gamma_i} \quad &\textrm{otherwise}\\
    \end{array}\right.;
\end{equation*}
for $h=1,\ldots,n_{sub}$, $h\neq i$ and \begin{equation}
\begin{array}{lll}
j=i+1 &if & i=1\\
j=i-1&  if& i=n_{sub}\\
j=\{i+1,i-1\} & if& 1<i<n_{sub}\\
\end{array};
\end{equation}
 computation of 
\begin{equation*}
    P_{i,h}=M_{i}P_{i,h}M_{h}^{T}+C_{\Omega_i \leftrightarrow \Omega_h} ,
\end{equation*}
and
\begin{equation*}
P_i=M_i P_i M_i^T+P_{\Omega_i \leftrightarrow \Omega_j}+Q_{i,k};
\end{equation*}
where
\begin{equation*}
%_{j=1,\ldots,n_{sub}; j\neq i}
C_{\Omega_i \leftrightarrow \Omega_h} =\left\{\begin{array}{ll}
\begin{array}{ll}(M_{i}P_{i,h-1}+M_{i,i+1}P_{i+1,h-1})M_{h,h-1}^{T} + M_{i,i+1}P_{i+1,h}M_{h}^{T}\end{array}  & \ \textrm{if $i=1$; $h=n_{sub}$}\\
\begin{array}{ll}(M_{i}P_{i,h-1}+M_{i,i+1}P_{i+1,h-1})M_{h,h-1}^{T} + M_{i,i+1}P_{i+1,h}M_{h}^{T}\\+(M_{i}P_{i,h+1}+M_{i,i+1}P_{i+1,h+1})M_{h,h+1}^{T}\end{array}  & \ \textrm{if $i=1$; $h\neq n_{sub}$}\\
\\
\begin{array}{ll}(M_{i}P_{i,h+1}+M_{i,i-1}P_{i-1,h+1})M_{h,h+1}^{T} + M_{i,i-1}P_{i-1,h}M_{h}^{T}\end{array} & \ \textrm{if $i=n_{sub}$; $h=1$}\\
\begin{array}{ll}(M_{i}P_{i,h+1}+M_{i,i-1}P_{i-1,h+1})M_{h,h+1}^{T} + M_{i,i-1}P_{i-1,h}M_{h}^{T}\\+(M_{i}P_{i,h-1}+M_{i,i+1}P_{i-1,h-1})M_{h,h-1}^{T} \end{array}  & \ \textrm{if $i=n_{sub}$; $h\neq 1$}\\
\\
\begin{array}{ll}(M_{i}P_{i,h+1}+M_{i,i-1}P_{i-1,h+1}+M_{i,i+1}P_{i+1,h+1})M_{h,h+1}^{T}\\ +(M_{i,i+1}P_{i+1,h}+ M_{i,i-1}P_{i-1,h})M_{h}^{T} \end{array} & \ \textrm{$1<i<n_{sub}$; $h=1$}
\\
\begin{array}{ll}(M_{i}P_{i,h-1}+M_{i,i-1}P_{i-1,h-1}+M_{i,i+1}P_{i+1,h-1})M_{h,h-1}^{T}\\ +(M_{i,i+1}P_{i+1,h} +M_{i,i-1}P_{i-1,h})M_{h}^{T} \end{array} & \ \textrm{$1<i<n_{sub}$; $h=n_{sub}$}
\\
\\
\begin{array}{ll}(M_{i}P_{i,h+1}+M_{i,i-1}P_{i-1,h+1}+M_{i,i+1}P_{i+1,h+1})M_{h,h+1}^{T} +(M_{i,i+1}P_{i+1,h}\\+ M_{i,i-1}P_{i-1,h})M_{h}^{T}+(M_{i}P_{i,h-1}+M_{i,i-1}P_{i-1,h-1}+M_{i,i+1}P_{i+1,h-1})M_{h,h-1}^{T} \end{array} & \ \textrm{otherwise}
\end{array}\right.;
\end{equation*}
and 
\begin{equation*}
 P_{\Omega_i \leftrightarrow \Omega_j}=\left\{\begin{array}{ll}
 \begin{array}{ll}
 M_{i,i+1}P_{i+1,i}M_{i}^T +  M_{i}P_{i,i+1}M_{i,i+1}^{T} +M_{i,i+1}P_{i+1}M_{i,i+1}^{T}\end{array} & \ \textrm{if $i=1$}\\
 \\
  \begin{array}{ll}
M_{i,i-1}P_{i-1,i}M_{i}^T + M_{i}P_{i,i-1}M_{i,i-1}^{T}+  M_{i,i-1}P_{i-1}M_{i,i-1}^{T} \end{array}& \ \textrm{if $i=n_{sub}$}\\
 \\
  \begin{array}{ll}
 (M_{i,i+1}P_{i+1,i}+M_{i,i-1}P_{i-1,i})M_{i}^T + M_{i}P_{i,i-1}M_{i,i-1}^{T}+ M_{i}P_{i,i+1}M_{i,i+1}^{T}\\ +(M_{i,i+1}P_{i+1,i-1}+M_{i,i-1}P_{i-1})M_{i,i-1}^{T}+(M_{i,i+1}P_{i+1}+M_{i,i-1}P_{i-1,i+1})M_{i,i+1}^{T} \end{array} & \ \textrm{otherwise}
 
 \end{array}\right.;
\end{equation*}

\noindent are the  matrices keeping track of contributions of  adjacent domains $\Omega_i$ and $\Omega_h$ to  overlapping region;  \noindent update of DD-KF gains:
\begin{equation*}
    K_i=\sum_{j=1\\ j\neq i}^{n_{sub}}(P_{i}H_{k+1}^{T}|_{I_{i}}+P_{i,j}H_{k+1}^{T}|_{I_{j}})\cdot F;
\end{equation*}
where 
\begin{equation*}
    F=(\sum_{i=1}^{n_{sub}} H_{k+1}|_{I_i}P_{i}H_{k+1}|_{I_{i}}^T +R_{1,\ldots,n_{sub}}+R_{k+1})^{-1}
\end{equation*}
and

\begin{equation*}
    R_{1,\ldots,n_{sub}}=\sum_{i=1}^{n_{sub}}(\sum_{j=1\\j\neq i}^{n_{sub}}H_{k+1}|_{I_j}P_{j,i})H_{k+1}|_{I_{i}}^T;
\end{equation*}
update of local estimate of matrices:
\begin{equation*}
    P_{i}=(I_{i}-K_i  H_{k+1}|_{I_i})P_{i}-\sum_{j=1\\ j \neq i}^{n_{sub}}(K_{i}H_{k+1}|_{I_{j}}P_{j,i});
\end{equation*}
 update of local estimate of  matrices between $e_i\in \mathbb{R}^{n_i}$ and $e_h\in \mathbb{R}^{n_h}$.
\begin{equation*}
    P_{i,h}=(I_{i}-K_i\cdot H_{k+1}|_{I_i}) P_{i,h}-\sum_{j=1\\ j\neq i}^{n_{sub}}(K_{i}H_{k+1}|_{I_{j}}P_{j,h}), \quad  h=1,\ldots,n_{sub} \quad h\neq i;
\end{equation*}
finally, we get to  the update of local estimates:
\begin{equation}\label{DD_KF_G_estimates}
    \widehat{x}_{i,k+1}={x}_{i,k+1}+K_{i}\cdot (y_{k+1}-\sum_{j=1 }^{n_{sub}}H_{k+1}|_{I_{j}}x_{j,k}).
\end{equation}
In computation of matrix $C_{\Omega_i \leftrightarrow \Omega_h}$, we refer to $P_{i}$ instead of $P_{i,i}$.\\
%\clearpage

\noindent   DD-KF algorithm  is described in Table \ref{DD_KF} below. 

\begin{table}{}
\caption{ DD-KF procedure}.
\label{DD_KF}
\textbf{procedure}  DD-KF(in:$\,H_{0},\ldots,H_{lmax},Q_0,\ldots,Q_{lmax-1}$,$R_0,\ldots,R_{lmax},y_0,\ldots,$\\ $y_{lmax}$,$\mu$, $\delta$, out:$\,\widehat{x}_{i}$)\\
%\textbf{Define} $P_{0}$\\
%\textbf{Begin of DD step}\\
%$I\longleftarrow \ (I_1,I_2,\ldots,I_p)$\\
%\textbf{Data Decomposition} among $p$ processing elements\\
%$\widehat{x}_0\longleftarrow \ (\widehat{x}_{1,0},\widehat{x}_{2,0},\ldots,\widehat{x}_{p,0})$\\
%\textbf{end of DD step}\\
%\textbf{for} $i=1,p$\\
\textbf{Set}  index $j$ of $I_{j}$, i.e. the set adjacent to  $I_i$ \\
\ \ \ \ \ \ \ \ \textbf{if} (\textit{floor}$(i/2)\times 2==i$) \textbf{then}  $j:=i-1$  \%  $i$ is even\\
\ \ \ \ \ \ \ \ \ \ \ \textbf{else} $j:=i+1$  \%  $i$ is odd\\
\ \ \ \ \ \ \ \  \textbf{end} \\
\textbf{Call} DD-Setup (in:$\,y_0,H_0|_{I_i},H_0|_{I_j},R_0$,out:$\,\widehat{x}_{i,0},P_{i,0}$)\\
\textbf{Call} Local-KF (in:$\,y_{l}$, $H_{l-1}|_{I_{i}},H_{k}|_{I_{i}},B_{l-1}|_{I_{i}},R_{l},P_{i,0}$,$\mu$, $\delta$, $\widehat{x}_{i,l-1},\,$ out:$\,\widehat{x}_{i,l}^{n}$)\\
%\textbf{endfor} \% among $p$ processing elements \\
\textbf{Set} $\widehat{x}_{i}:=\widehat{x}_{i,l}^{n}$ \% DD-KF  estimate in $I_i$\\
\textbf{endprocedure}\\[1cm]

\textbf{procedure} DD-Setup(in: $\,y_0,H_0|_{I_i},H_0|_{I_j},R_0$, out: $\,\widehat{x}_{i,0},P_{i,0}$)\\
\textbf{Set up} reduced matrices: $H_{i,0}:= H_0|_{I_i}$, $H_{j,0}:= H_0|_{I_j}$ \\
\textbf{Compute} predicted covariance matrix $P_{i,0}= (H_{i,0}^{T}R_{0}H_{i,0})^{-1}$ \\
\textbf{Compute} $P_{H_{j,0}}$ \\
\textbf{Compute}  Kalman estimate: $\widehat{x}_{i,0}= (H_{i,0}^{T}P_{H_{j,0}}H_{i,0})^{-1}H_{i,0}^{T}P_{H_{j,0}}y_{0}$ \\
\textbf{endprocedure}\\[1cm]
\textbf{procedure}  Local-KF(in:$\,y_{k}$, $H_{l-1}|_{I_{i}},H_{l}|_{I_{i}},B_{l-1}|_{I_{i}},R_{l},P_{i,0}$,$\mu$, $\delta$, $\widehat{x}_{i,l-1}$; out:$\,\widehat{x}_{i,l}^{n}$)\\
\ \ \ \textbf{for} $l=1,\,lmax$ \%loop over  KF steps \\
\ \ \ \ \ \ $n:=0$, $\widehat{x}_{i,l}^{n+1}:=0$\\
\ \ \ \ \ \ \ \ \ \ \ \ \ \ \textbf{repeat}\\
\ \ \ \ \ \ \ \ \ \ \ \ \ \  \ \ \ $n:=n+1$\\
\ \ \ \ \ \ \ \ \ \ \ \ \ \  \ \ \ \textbf{Set up}  predicted covariance matrix $P_{i,l}$ \\
\ \ \ \ \ \ \ \ \ \ \ \ \ \  \ \ \ \textbf{Compute} Kalman gains $K_{i,l}$ \\
\ \ \ \ \ \ \ \ \ \ \ \ \ \  \ \ \ \textbf{Update}  covariance matrix $P_{i,l}$ \\
\ \ \ \ \ \ \ \ \ \ \ \ \ \  \ \ \ \textbf{Compute} matrices $K_{i,j}$ \\
\ \ \ \ \ \ \ \ \ \ \ \ \ \  \ \ \ \textbf{Send and Receive} boundary conditions among adjacent sets\\
\ \ \ \ \ \ \ \ \ \ \ \ \ \  \ \ \ \textbf{Exchange} of data  among $S_{I_{i}\leftrightarrow I_j}(\widehat{x}_{i,l}^{n})$ \\
\ \ \ \ \ \ \ \ \ \ \ \ \ \  \ \ \ \textbf{Compute} Kalman estimate at step $n+1$\\ \ \ \ \ \ \ \ \ \ \ \ \ \ \  \ \ \ $\widehat{x}_{i,l}^{n+1}=\widehat{x}_{i,l-1}+\textbf{K}_{i,l}\left[({y}_{l}-{H}_{l}|_{I_{j}}\widehat{x}_{j,l}^{n})-H_{l}|_{I_{i}}\widehat{x}_{i,l-1}\right]+\mathcal{S}_{I_{i}\leftrightarrow I_{j}}(\widehat{x}_{j,l}^{n})$ \\
\ \ \ \ \ \ \ \ \ \ \ \ \ \  \ \ \ \ \ \ \ \ \ \ \textbf{if}($s\neq 0$) \textbf{then} \% decomposition with overlap  \\
\ \ \ \ \ \ \ \ \ \ \ \ \ \  \ \ \ \ \ \ \ \ \ \ \ \ \ \textbf{Set up}  of  the extensions on ${I}_{i}$ of $\widehat{x}_{i,l}^{n+1}$ given  on $I_{i,j}$: $EO_{I_{i}}(\widehat{x}_{i,l}^{n}|_{{I}_{i,j}})$, $EO_{I_{i}}(\widehat{x}_{j,l}^{n}|_{{I}_{i,j}})$\\
\ \ \ \ \ \ \ \ \ \ \ \ \ \  \ \ \ \ \ \ \ \ \ \ \ \ \  \textbf{Exchange} data on the overlap set $I_{i,j}$:  $\nabla \mathcal{O}_{i,j}(EO_{I_{i}}(\widehat{x}_{i,l}^{n}|_{I_{i,j}}),EO_{I_{i}}(\widehat{x}_{j,l}^{n}|_{I_{i,j}}))$ \\
\ \ \ \ \ \ \ \ \ \ \ \ \ \  \ \ \ \ \ \ \ \ \ \ \ \ \ \textbf{Update} Kalman estimate at step $n+1$:\\ \ \ \ \ \ \ \ \ \ \ \ \ \ \  \ \ \ \ \ \ \ \ \ \ \ \ \ $\widehat{x}_{i,k}^{n+1}\leftarrow \widehat{x}_{i,l}^{n+1}+{\mu}\cdot  P_{i,k} \nabla \mathcal{O}_{i,j}(EO_{I_{i}}(\widehat{x}_{i,l}^{n}|_{I_{i,j}}),EO_{I_{i}}(\widehat{x}_{j,l}^{n}|_{I_{i,l}}))$ \\
\ \ \ \ \ \ \ \ \ \ \ \ \ \  \ \ \ \ \ \ \ \ \ \ \textbf{endif}\\
\ \ \ \ \ \ \ \ \ \ \ \ \ \ \textbf{until} ($\| \widehat{x}_{i,l}^{n+1}-\widehat{x}_{i,l}^{n} \| <TOL$)\\
\textbf{endfor} \% end of the loop over KF steps\\
%\textbf{endfor} \% among $p$ processing elements \\
\textbf{end procedure}\\ 
\end{table}

\section{Reliability assessment}

\noindent In this section we assess  reliability of the proposed method. In particular,  Theorem \ref{propeq}  proves the consistence of DD-KF, i.e.  state estimates $\widehat{x}_{1,k+1}\in \mathbb{R}^{n_{1}}$, $\widehat{x}_{2,k+1}\in \mathbb{R}^{n_{2}}$, as in (\ref{stimastato_DD}), are equal to  reductions of KF estimate $\widehat{x}_{k+1}\in \mathbb{R}^{n}$ to  $I_{1}$ and  $I_{2}$, respectively.  The same result can be obtained for  $n_{sub}>2$  spatial subdomains considering DD-KF estimates defined in (\ref{DD_KF_G_estimates}) and  proceeding as in proof of Theorem \ref{propeq}. In the following we let  $\Delta_{j}$ be fixed, then for simplicity of notation, we refer to $\widehat{x}_{i,k}^{\Delta_{j}}$ omitting superscript $\Delta_{j}$.

\begin{theorem}\label{propeq}
 
 Let $\widehat{x}_{k+1}\in \mathbb{R}^{n}$, $\forall k=\bar{s}_{j-1},\ldots,\bar{s}_{j-1}+s_{j}-1$ be  KF estimate in (\ref{stimapredetta1}) and $\widehat{x}_{1,k+1}\in \mathbb{R}^{n_{1}}$, $\widehat{x}_{2,k+1}\in \mathbb{R}^{n_{2}}$ be the DD-KF estimates in (\ref{stimastato_DD}). Then it holds that
\begin{equation}\label{equiv1}
\widehat{x}_{1,k+1}\equiv \widehat{x}_{k+1}|_{I_{1}}, \quad \widehat{x}_{2,k+1}\equiv \widehat{x}_{k+1}|_{I_{2}}. 
\end{equation}
\end{theorem}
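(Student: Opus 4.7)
The proof is by induction on $k$ over the window $\Delta_j$, establishing the joint invariant that the local DD-KF quantities coincide with the corresponding restrictions of the global KF quantities:
\begin{equation}
\widehat{x}_{i,k} \equiv \widehat{x}_k|_{I_i}, \qquad P_{i,k} \equiv P_k|_{I_i \times I_i}, \qquad P_{i,j,k} \equiv P_k|_{I_i \times I_j} \ (i \neq j).
\end{equation}
The identity (\ref{equiv1}) is then the state component of this invariant at step $k+1$. The base case at $k = \bar{s}_{j-1}$ holds by the compatibility condition (\ref{initial_states}) together with the initialization $\widehat{x}_0 = x_0$ for $j=1$, or by induction on the previous window $\Delta_{j-1}$ otherwise; the initial covariance $P_0$ splits trivially into its four blocks.

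The inductive step reduces to pure block-matrix algebra on the partitioned operators (\ref{noverlapM}), (\ref{noverlap-H}). For the state predictor, the $I_1$-component of the global prediction $M\widehat{x}_k + b_k$ equals $M_1\,\widehat{x}_k|_{I_1} + M_{1,2}\,\widehat{x}_k|_{I_2} + b_k|_{I_1}$; by the inductive hypothesis and the definition of $b_{1,k}$ in (\ref{b_k}), this is exactly the DD-KF formula (\ref{stimapredetta_DD}) for $x_{1,k+1}$, and the $I_2$-component is symmetric. For the covariance predictor, expanding $M P_k M^T$ blockwise produces in the $(I_1,I_1)$ position the sum $M_1 P_1 M_1^T + M_{1,2} P_{2,1} M_1^T + M_1 P_{1,2} M_{1,2}^T + M_{1,2} P_2 M_{1,2}^T$, which after adding $Q_{1,k}$ matches (\ref{predictedmatrix_DD})-(\ref{matP_in}); the $(I_1,I_2)$ block produces $M_1 P_{1,2} M_2^T + C_{\Omega_1 \leftrightarrow \Omega_2}$ as in (\ref{matP})-(\ref{mat_C}).

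For the corrector phase, the blockwise expansion of $S_{k+1} = H_{k+1} P_{k+1} H_{k+1}^T + R_{k+1}$ using (\ref{noverlap-H}) produces exactly $F^{-1}$ from (\ref{matF})-(\ref{R12}), so the identity $K_{k+1} = P_{k+1} H_{k+1}^T S_{k+1}^{-1}$ splits into its two row-blocks as $(K_1, K_2)^T$ according to (\ref{guadagnodikalman_DD}). The covariance correction $(I - K_{k+1} H_{k+1}) P_{k+1}$ and the state update $x_{k+1} + K_{k+1}(y_{k+1} - H_{k+1} x_{k+1})$ likewise split blockwise, reproducing (\ref{P1-P2}), (\ref{P12-P21}) and (\ref{stimastato_DD}), which closes the induction.

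The main obstacle is the bookkeeping in the overlapping case $s \neq 0$: because the sub-block decomposition (\ref{deco_M2}) makes $M_1$ and $M_2$ both contain the overlap block $\mathbb{M}_{2,2}$ and $H_{k+1}^{1,2}$ is weighted by $\alpha, \beta$ with $\alpha + \beta = 1$, the block identities used above must be verified to not double-count contributions on $I_{1,2}$. Once this consistency check is performed on each of the four blocks of $M P_k M^T$ and $H_{k+1} P_{k+1} H_{k+1}^T$, the same block arithmetic goes through and the inductive invariant is preserved at every step.
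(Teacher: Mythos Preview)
Your proposal is correct and follows essentially the same route as the paper: both arguments reduce to block-matrix algebra on the partitions (\ref{noverlapM})--(\ref{noverlap-H}), showing that the local gains satisfy $K_i \equiv K_{k+1}|_{I_i}$ via the block identity $P_{k+1}|_{I_i\times I_j}\equiv P_{i,j}$ and then propagating this to the state and covariance updates by induction on $k$. Your presentation is slightly more systematic in making the joint invariant on $(\widehat{x}_{i,k},P_{i,k},P_{i,j,k})$ explicit, whereas the paper first isolates the gain equivalence and then treats the overlapping case $s\neq 0$ by invoking $x_{k+1}|_{I_{1,2}}=x_{1,k+1}|_{I_{1,2}}=x_{2,k+1}|_{I_{1,2}}$ together with $\alpha+\beta=1$ to collapse the duplicated overlap contributions; that last consistency check is exactly the bookkeeping you flag as the main obstacle.
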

\noindent \underline{\bf Proof}.
{\footnotesize 
For $k=\bar{s}_{j-1},\ldots,\bar{s}_{j-1}+s_{j}-1$, we prove that  DD-KF gains $K_{1}\in \mathbb{R}^{n_{1}\times  m\cdot d}$, $K_{2}\in \mathbb{R}^{n_{2}\times  m\cdot d}$ in (\ref{guadagnodikalman_DD}) are
\begin{equation}\label{equivalenza}
K_{1}\equiv K_{k+1}|_{I_{1}} \quad K_{2}\equiv K_{k+1}|_{I_{2}},
\end{equation}
where $K_{k+1}\in \mathbb{R}^{n\times m\cdot d}$ is the KF gain in (\ref{guadagnodikalman}). We first consider DD-KF without overlapping. KF gain can be written as follows
{\footnotesize
\begin{equation}\label{KFgain}
\begin{array}{ll}
\begin{array}{ll}
K_{k+1}&\equiv P_{k+1}H_{k+1}^{T}\cdot (H_{k+1}P_{k+1}H_{k+1}^{T}+R_{k+1})^{-1}\\ &=\left[\begin{array}{ll}
P_{k+1}|_{I_{1}\times I_{1}} \ P_{k+1}|_{I_1\times I_2}\\ P_{k+1}|_{I_2\times I_1}\ P_{k+1}|_{I_{2}\times I_{2}} 
\end{array}\right]\left[\begin{array}{ll} H_{k+1}|_{I_{1}}^{T}\\
H_{k+1}|_{I_2}^{T}\end{array}\right]
\\
&\cdot\left(\left[\begin{array}{ll}
H_{k+1}|_{I_{1}}\ H_{k+1}|_{I_{2}}
\end{array}\right]\left[\begin{array}{ll}
P_{k+1}|_{I_{1}\times I_{1}} \ P_{k+1}|_{I_{1}\times I_{2}}\\
 P_{k+1}|_{I_{2}\times I_{1}}\ P_{k+1}|_{I_{2}\times I_{2}} 
\end{array}\right]\left[\begin{array}{ll}
H_{k+1}|_{I_{1}}^{T}\\ H_{k+1}|_{I_{2}}^{T}
\end{array}\right]+R_{k+1}\right)^{-1}
\end{array},
\end{array}
\end{equation}
}
\noindent where $ P_{k+1}\in \mathbb{R}^{n\times n}$ is the predicted  covariance matrix, which is  defined in (\ref{predictedmatrix}).
We define the matrix
\begin{equation}
R_{1,2}:=(H_{k+1}|_{I_2}P_{k+1}|_{I_2\times I_{1}}H_{k+1}|_{I_1}^{T}+H_{k+1}|_{I_{1}}P_{k+1}|_{I_{1}\times I_{2}}H_{k+1}|_{I_2}^{T});
\end{equation}
and we obtain that 
\begin{equation}\label{res_kalmangains}
\begin{array}{ll}
\begin{array}{ll}
K_{k+1}|_{I_{1}}&=(P_{k+1}|_{I_{1}\times I_{1}}H_{k+1}|_{I_1}^{T}+P_{k+1}|_{I_1\times I_{2}}H_{k+1}|_{I_2}^{T})\\
&\cdot(H_{k+1}|_{I_1}P_{k+1}|_{I_{1}\times I_{1}}H_{k+1}|_{I_2}^{T}+H_{k+1}|_{I_2}P_{k+1}|_{I_2\times I_{2}}H_{k+1}|_{I_2}^{T}+R_{1,2}+R_{k+1})^{-1}
\end{array}
\\
\begin{array}{ll}
K_{k+1}|_{I_{2}}&=(P_{k+1}|_{I_{2}\times I_{2}}H_{k+1}|_{I_{2}}^{T}+P_{k+1}|_{I_{2}\times I_{1}}H_{k+1}|_{I_{1}}^{T})\\
&\cdot(H_{k+1}|_{I_{1}}P_{k+1}|_{I_{1}\times I_{1}}H_{k+1}|_{I_{2}}^{T}+H_{k+1}|_{I_{2}}P_{k+1}|_{I_{2}\times I_{2}}H_{k+1}|_{I_{2}}^{T}+R_{1,2}+R_{k+1})^{-1} 
\end{array}
\end{array}.
\end{equation}
We similarly obtain that 
\begin{equation}
\begin{array}{ll}
P_{k+1}|_{I_{1}\times I_{1}}\equiv P_{1} \quad P_{k+1}|_{I_{1}\times I_{2}}\equiv P_{1,2} \\

P_{k+1}|_{I_{2}\times I_{2}}\equiv P_{2} \quad P_{k+1}|_{I_{2}\times I_{1}}\equiv P_{2,1} 
\end{array}
\end{equation}
where $P_{1}$, $P_{2}$ are defined in (\ref{P1-P2}) and $P_{1,2}$, $P_{1,2}$ in (\ref{P12-P21}).
From (\ref{res_kalmangains}) and (\ref{guadagnodikalman_DD}) we obtain the equivalence in (\ref{equiv1}). We consider the predicted estimate $x_{k+1}$ in (\ref{stimapredetta}) so, KF estimate $\widehat{x}_{k+1}\in \mathbb{R}^{n}$ in (\ref{stimapredetta1}) can be written as follows
\begin{equation}
\left[\begin{array}{ll}
\widehat{x}_{k+1}|_{I_{1}}\\
\widehat{x}_{k+1}|_{I_{2}}
\end{array}\right]=\left[\begin{array}{ll}x_{k+1}|_{I_{1}}\\
x_{k+1}|_{I_{2}}
\end{array}\right]
+\left[\begin{array}{ll} K_{1}\\K_{2}\end{array}\right]\left( y_{k+1}-\left[\begin{array}{ll}
H_{k+1}|_{I_{1}}\ 
H_{k+1}|_{I_{2}}
\end{array}\right]\left[\begin{array}{ll}
{x}_{k}|_{I_{1}}\\
{x}_{k}|_{I_{2}}
\end{array}\right]\right).
\end{equation}
It is simply to prove that 
$$x_{k+1}|_{I_{1}}\equiv x_{1,k+1} \quad x_{k+1}|_{I_{2}}\equiv x_{2,k+1}, $$ where $x_{1,k+1}\in \mathbb{R}^{n_{1}}$ and $x_{2,k+1}\in \mathbb{R}^{n_{2}}$ are the predicted estimates in (\ref{stimastato_DD}), so we get the thesis in (\ref{equiv1}).\\[.5cm]
 In case of decomposition of $\Omega$ with overlap,  DD-KF estimates in (\ref{stimastato_DD})
can be written as follows:
\footnotesize
\begin{equation}\label{equiv_overlap}
\begin{array}{ll}
\widehat{x}_{i,k+1}&=x_{i,k+1}+K_{i}\left[y_{k+1}-(H_{k+1}|_{\tilde{I}_{1}}x_{1,k}|_{\tilde{I}_{1}} +\frac{1}{2}H_{k+1}|_{{I}_{1,2}}x_{1,k}|_{\tilde{I}_{1,2}}+\frac{1}{2}H_{k+1}|_{{I}_{1,2}}x_{2,k}|_{\tilde{I}_{1,2}} +H_{k+1}|_{\tilde{I}_{2}}x_{2,k})\right], \ i=1,2\\
\end{array}.
\end{equation}

\noindent Since 
\footnotesize
$$x_{k+1}|_{I_{1,2}}= x_{1,k+1}|_{I_{1,2}}=x_{2,k+1}|_{I_{1,2}}$$
(\ref{equiv_overlap}) becomes 
\footnotesize
\begin{equation}\label{DD-KF_overlap}
\begin{array}{ll}
\widehat{x}_{i,k+1}&=x_{i,k+1}+K_{i}\left[y_{k+1}-(H_{k+1}|_{\tilde{I}_{1}}x_{1,k}|_{\tilde{I}_{1}} +H_{k+1}|_{{I}_{1,2}}x_{k}|_{I_{1,2}}+H_{k+1}|_{\tilde{I}_{2}}x_{2,k})\right], \ i=1,2 \\
\end{array}.
\end{equation}
Noting that:
\begin{equation*}\label{dec_H}
    H_{k+1}=\left[H_{k+1}|_{\tilde{I}_1} \ H_{k+1}|_{{I}_{1,2}} \ H_{k+1}|_{\tilde{I}_2} \right],
\end{equation*}
\noindent  (\ref{equiv_overlap}) becomes:
\footnotesize
\begin{equation}\label{KF_o}
   \widehat{x}_{k+1}|_{I_i}=x_{k+1}|_{I_i}+K_{i}\left[y_{k+1}-(H_{k+1}|_{\tilde{I}_{1}}x_{k}|_{\tilde{I}_{1}} +H_{k+1}|_{{I}_{1,2}}x_{k}|_{I_{1,2}}+H_{k+1}|_{\tilde{I}_{2}}x_{k}|_{I_2})\right] 
\end{equation}

\noindent where $I_1$, $I_{1,2}$, $I_{2}$ and $\tilde{I}_{1}$, $\tilde{I}_{2}$ are defined in (\ref{set_indici}) and (\ref{set_indici_tilde}), respectively. Moreover,  equivalence in (\ref{equivalenza}) can be   similarly  obtained. Supposing (\ref{equiv1}) is true for  $k$, from (\ref{DD-KF_overlap}) and (\ref{KF_o}), we obtain  (\ref{equiv1}) for $k+1$. 
\\[.5cm]
% If we consider the decomposition of  $\Omega$ in $\Omega_{1}$ and $\Omega_{2}$ with overlap, i.e. $\Omega_{1,2}\neq \emptyset$ and  $s\neq 0$, 
% we get:
%\begin{equation}
%    \widehat{x}=\left\{\begin{array}{ll}
 %   \widehat{x}_{1} & \textit{on $\Omega_{1}$}\\
 %   (\widehat{x}_{1}+\widehat{x}_{2})/2 &\textit{on $\Omega_{1,2}$}\\
  %  \widehat{x}_{2} & \textit{on $\Omega_{2}$}
    
   % \end{array}.\right.
%\end{equation} 
}

\normalsize

\section{Experimental Results}

We apply DD-KF method to the initial boundary problem of SWEs. The discrete model is obtained  by using  Lax-Wendroff scheme \cite{LeVeque}. As DD-KF is intended to give the same solution of KF experiments are exclusively aimed to validate  DD-KF w.r.t. KF.  Reliability is assessed using maximum error and RMSE. Details of the whole derivation DD-KF on SWE are reported in the APPENDIX (Section \S \ref{section6}). \\

\noindent \textbf{KF configuration}. We consider the following experimental scenario:
\begin{itemize}
\item $\Omega=(0,L)$:   where $L=1$;
\item $\Delta=[0,T]$:   where $T=1.5$;
\item $p_v=2$: number of physical variables;
\item $n=500$ and $b_c=2$: numbers of elements of $\Omega$ and $\partial \Omega$, respectively; 
\item $n_{x}:=n+b_c=502$ and $nt=53$: numbers of elements of $\bar{\Omega}$ and $\Delta$, respectively;
\item $\Delta x=\frac{1}{500}$ and $\Delta t$: step size of $\Omega$ and $\Delta$, respectively, where $\Delta t$ varies to satisfy the stability condition in (\ref{S-Cbis}) of  Lax-Wendroff method;
\item $D_{n}({\Omega})=\{x_{i}\}_{i=0,\ldots,n-1}$: discretization of $\Omega$ where $x_{i}=i \cdot \Delta x$;
\item $D_{nt}({\Delta})=\{t_{k}\}_{k=0,\ldots,nt-1}$: discretization of $\Delta$ where $t_{k}=t_{k-1}+\Delta t$;
\item $\textbf{x}(t,x)\in \mathbb{R}^{2}$: state estimate with $t\in \Delta$ and $x\in \Omega$;
\item $m=14$:  number of observations at step $k=0,1,\ldots,nt-1$; 
\item $v_{k}:=10^{-2}\cdot \bar{v}_k\in \mathbb{R}^{m}$: observations errors, with $\bar{v}_k$ a random vector drawn from the standard normal distribution, for $k=0,1,\ldots,nt-1$;
\item $y_{k}:=x(t_{k},x_j)+v_{k}\in \mathbb{R}^{m}$: observations vector for $j=1,\ldots, m$ at  step $k=0,1,\ldots,nt-1$; observations are obtained by adding observation errors to the full solution of the SWE's without using domain decomposition while using exact initial and boundary conditions. 
\item $H_{k}\in \mathbb{R}^{m\times n}$: piecewise linear interpolation operator whose coefficients are computed using the
points of the domain nearest to  observation values;
\item $\sigma_{m}^{2}=5.0\times 10^{-1} $, $\sigma_{0}^{2}= 3.50\times 10^{-1}$: model and  observational error variances;
\item $Q\equiv Q_{k}=\sigma_{m}^{2} C$: covariance matrix of the model error at  step $k=0,1,\ldots,nt-1$, where $C\in \mathbb{R}^{n\times n}$ denotes  Gaussian correlation structure of  model errors in (\ref{matC});
\item $R\equiv R_{k}=\sigma_{0}^{2} I_{m,m}\in \mathbb{R}^{m\times m}$: covariance matrix of the errors of the observations at step $k=0,1,\ldots,nt-1$.
\end{itemize}

\noindent  \textbf{DD-KF configuration.} We consider the following experimental scenario:
\begin{itemize}
\item $s=2,4,\ldots,200$  size of the spatial overlap;
\item $n_{1}=250+s/2$, $I_{1}=\{1,\ldots,250\}$ with $|I_{1}|=n_{1}$;
\item $n_{2}=250+s/2$, $I_{1}=\{n_{1}-s+1,\ldots,500\}$ with $|I_{2}|=n_{2}$;
\item ${s}_{1,2}=2,3,\ldots,50$  size of the temporal overlap;
\item $\bar{s}_{0}:=0$, ${s}_{0,1}:=0$, $s_{1}=25+s_{1,2}/2$, $\bar{s}_{1}:=s_{1}-{s}_{1,2}$ and $s_{2}=28+s_{1,2}/2$;
\item $C:=\{c_{i,j}\}_{i,j=1,\ldots,n}\in \mathbb{R}^{n\times n}$: Gaussian correlation structure of  model error where
 \begin{equation}\label{matC}
c_{i,j}=\rho^{|i-j|^{2}}, \quad \rho=exp\left(\frac{-\Delta x^{2}
}{2L^{2}}\right), \quad |i-j|<n/2 \quad  \begin{array}{ll}\textrm{for}\ i=1,\ldots,n_{1},\\
j=1,\ldots,n_{1}-s \ \textrm{and}\\ \textrm{for} \ i,j=n_{1}-s+1,\ldots,n\end{array}.
\end{equation}
 According to \cite{JSC} here we assume  the model error to be gaussian . As explained in \cite{JSC},  a research collaboration between us and CMCC (Centro Euro Mediterraneo per i Cambiamenti
Climatici) give us the opportunity to use the software called OceanVar.
OceanVar is used in
Italy to combine observational data (Sea level anomaly, sea-surface temperatures, etc.) with
backgrounds produced by computational models of ocean currents for the Mediterranean Sea
(namely, the NEMO framework \cite{NEMO}). OceanVAR assumes gaussian model errors.
\end{itemize}
%\begin{enumerate}

\noindent \textbf{Reliability Metrics}.
\noindent For $j=1,2$ and $k=\bar{s}_{j-1}+1,\ldots,\bar{s}_{j-1}+s_{j}$ and fixed the size of time overlap $s_{1,2}=1$, we compute $\widehat{x}_{k}[1]$, i.e.  KF estimate  of the wave height $h$ on $\Omega$. In order to quantify the difference between  KF estimate and  DD-KF estimates on $\Delta$ w.r.t. the size of the spatial overlap, e.g. the parameter  $s\in \mathbb{N}$, we use: $$error_{s}^{\Omega\times \Delta}:=\max(error_{s}^{\Omega\times \Delta_{1}},error_{s}^{\Omega\times \Delta_{2}})$$ where $$error_{s}^{\Omega\times \Delta_{j}}=\max_{k=\bar{s}_{j-1}+1,\ldots,\bar{s}_{j-1}+s_{j}}(||\widehat{x}_{k}[1]-\widehat{x}_{k}[1]^{\Omega\times \Delta_{j}}||)\,.$$ \\
 In the same way, we fix $s$  and compute the error between  KF estimate and  DD-KF estimates on $\Delta$ w.r.t. the size of the  overlap in time domain, e.g. the parameter  $s_{1,2}\in \mathbb{N}$:  $$error_{s_{1,2}}^{\Omega\times \Delta_{j}}:=\max_{k=\bar{s}_{j-1}+1,\ldots,\bar{s}_{j-1}+s_{j}}(||\widehat{x}_{k}[1]-\widehat{x}_{k}[1]^{\Omega\times \Delta_{j}}||)\,.$$\\

\noindent  Reliability of  DD-KF estimates is measured in terms of the Root Mean Square Error (RMSE) for $ k=0,1,\ldots,nt-1$, which is computed as
\begin{equation}
\begin{array}{ll}
RMSE_{k}^{\Omega_{1}\cup \Omega_{2}}=\sqrt{\frac{\sum_{i=1}^{n}({x}_{k}[1](i)-\widehat{x}_{k}^{\Omega\times \Delta}[1](i))^{2}}{n}} \\\quad
RMSE_{k}^{\Omega}=\sqrt{\sum_{i=1}^{n}\frac{({x}_{k}[1](i)-\widehat{x}_{k}[1](i))^{2}}{n}}.
\end{array}
\end{equation}
\noindent  Figure \ref{fig_err} shows $error_{s}^{\Omega\times \Delta}$ versus $s$,  obtained within the machine precision ($10^{-15}$). Also,  $error_{s_{1,2}}^{\Omega\times \Delta_{j}}$ where $s_{1,2}=200$ is again obtained  within the maximum attainable accuracy in double precision (i.e. in our case,  $10^{-15}$), with $j=1,2$ as shown in Figure \ref{fig_err2}.  As  expected,  the accuracy does not depend on the size of the overlap because,  DD-KF   is  a direct method using  estimates provided by KF as initial and boundary values of  the local dynamic model. In particular, in \ref{fig_err2} (b) when the size of temporal overlap reaches $50$ we observe a  relative  increment of the error of one unit, very significant with respect to the overall magnitude of the error.  This effect is due to the increasing impact of  round off errors on the accuracy of the solution. As expected, in a DD method, the extra work  performed on the overlapped region with an  increasing  size can be seen as the effect of a preconditioner on  overlapping region which can overestimate the solution \cite{Chan}.\\
\noindent  Figure \ref{fig_RMSE} shows that the $RMSE_{k}^{\Omega_{1}\cup \Omega_{2}}$ and $RMSE_{k}^{\Omega}$ decrease, in particular as we expected, $RMSE_{k}^{\Omega_{1}\cup \Omega_{2}}$ and $RMSE_{k}^{\Omega}$ coincide during the whole assimilation window.\\

\noindent \textbf{Qualitative analysis}. A qualitative analysis of DD-KF estimate $\widehat{x}_{k+1,}[1]^{\Omega \times \Delta_{j}}$ at time $t_{25}\in \Delta_{1}$ and $t_{53}\in \Delta_{2}$, i.e. for $k+1=25,53$ and $j=1,2$, respectively is shown in Figure \ref{fig_data}. In Figure \ref{fig_data} (a), we note that $\widehat{x}_{25}[1]^{\Omega \times \Delta_{1}}$ moves from the trajectory of the model state $x[1]_{25}$ to DD-KF estimate position closer to the observation. At the second observation there is a significantly smaller alteration of the trajectory towards the observation. At the fifth observation, it is very close to the observation so we would not expect much effect from the assimilation of this observation. As the model evolves in
time it is clear to see that  observations have a diminishing effect on the correction
of the forecast state estimate, as we can note in Figure \ref{fig_data} (b). In particular, for different choices of  $\sigma_{m}^{2}$ and $\sigma_{0}^{2}$ (model and  observation error variances),  trajectory of DD-KF estimates $\widehat{x}_{k+1}[1]^{\Omega \times \Delta_{j}}$  changes, for $j=1,2$. Figure \ref{fig_R_Q} (a) shows that for $\sigma_{m}^{2}=0$,  DD-KF method gives full confidence to the model, indeed, trajectory of  $\widehat{x}_{25}[1]^{\Omega \times \Delta_{1}}$ coincides with the model state $x[1]_{25}$, otherwise considering  $\sigma_{0}^{2}=10^{-5}$,  DD-KF method gives more confidence to observations,  as shown in Figure \ref{fig_R_Q} (b).\\ 

\noindent \textbf{Capability of DD-KF to deal with the presence of different observation errors}.   To point out the capability of DD-KF to deal with the presence of different observation errors, in Figure \ref{fig_var} we  reports results obtained considering two different values of observations errors  in $\Omega_1$ and $\Omega_2$, i.e.  $v_k=[v_k^1 v_k^2]$, respectively;    in particular, in  Figure \ref{fig_var} (a) we set  $v_k^1=10^{-15} \times \bar{v}_k^1$ in $\Omega_1$ and $v_k^2=2  \bar{v}_k^2$ in $\Omega_2$; in  Figure \ref{fig_var} (b) we set   $v_k^1=  \bar{v}_k^1$ in $\Omega_1$ and $v_k^2=10^{-15} \times \bar{v}_k^2$ in $\Omega_2$, where $\bar{v}_k^{1}$ and $\bar{v}_k^{2}$ are random vectors drawn from the standard normal distribution and $k=10$. Results shown in  Figure \ref{fig_var} (a) confirm the effect of the variance observations $\sigma_{2,0}^{2}=  4.28 \times 10^{0}$ on  DD-KF estimate $\widehat{x}_{k}[1]^{\Omega \times \Delta}$  in  $\Omega_2$ ; similarly, it occurs  in $\Omega_1$ when $\sigma_{1,0}^{2} = 1.04 \times 10^{0}$, as results in Figure \ref{fig_var} (b) show. As a consequence DD-KF method gives more confidence to model state than to the observations in $\Omega_2$ (see Figure  Figure \ref{fig_var} (a)) or in $\Omega_1$ (see Figure \ref{fig_var} (b)), respectively, depending on the variance observations. \\

\begin{figure}[h!]
\centering
{\includegraphics[width=.6\textwidth]{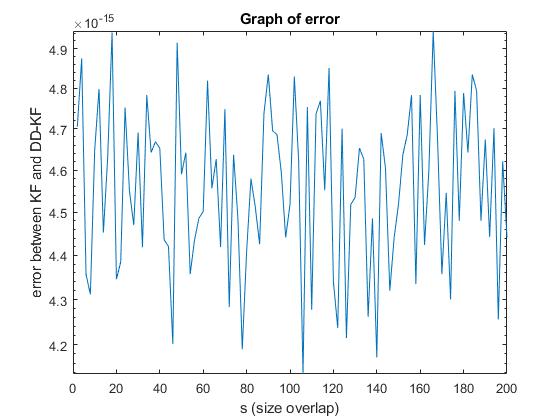}}
\caption{Graph of $error_{s}^{\Omega\times \Delta}$ versus  the spatial overlap $s$ while  time overlap is fixed to $s_{1,2}=1$.}
\label{fig_err}
\end{figure}

\begin{figure}[h!]
\centering
\subfloat[][\emph{Behaviour of $error_{s_{1,2}}^{\Omega\times \Delta_{1}}$ versus the size of time overlap $s_{1,2}$.}]
{\includegraphics[width=.68\textwidth]{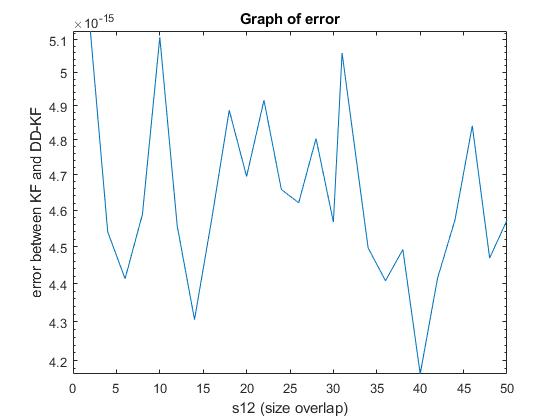}}\quad
\subfloat[][\emph{Behaviour of $error_{s_{1,2}}^{\Omega\times \Delta_{2}}$ versus  the size of time overlap $s_{1,2}$.}]
{\includegraphics[width=.68\textwidth]{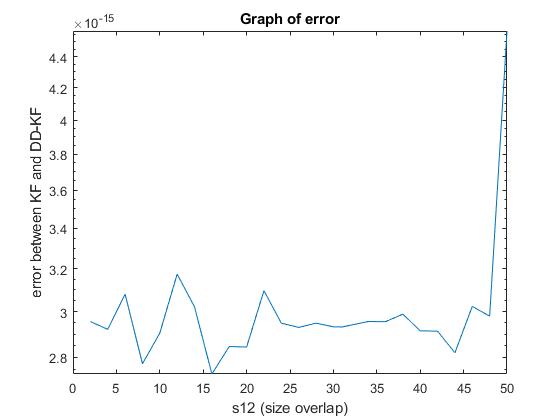}}
\caption{Behaviour  of  $error_{s_{1,2}}^{\Omega\times \Delta_{1}}$ and $error_{s_{1,2}}^{\Omega\times \Delta_{2}}$ versus  the size of time overlap $s_{1,2}$ while  the size of spatial overlap is $s=200$.}
\label{fig_err2}
\end{figure}

\begin{figure}[h!]
\centering
{\includegraphics[width=.68\textwidth]{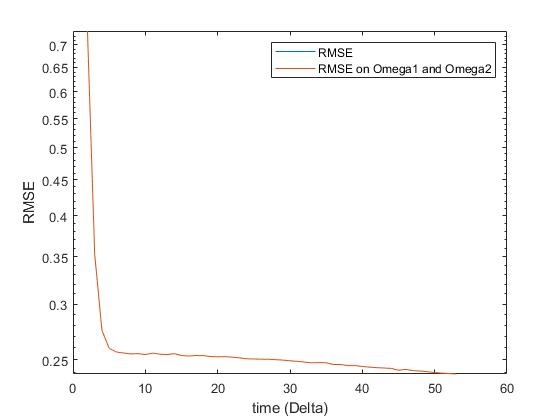}}\quad
\caption{Behaviour of $RMSE_{k}^{\Omega_{1}\cup \Omega_{2}}$ and $RMSE_{k}^{\Omega}$ versus time. }
\label{fig_RMSE}
\end{figure}

\begin{figure}[h!]
\centering
\subfloat[][\emph{SWE solution $x[1]_{25}\in \mathbb{R}^{n_{x}}$  and the DD-KF estimate $\widehat{x}_{25}^{\Omega\times \Delta_{1}}[1]\in \mathbb{R}^{n_{x}}$  at  $t_{25}$ on $\Omega$.}]
{\includegraphics[width=.68\textwidth]{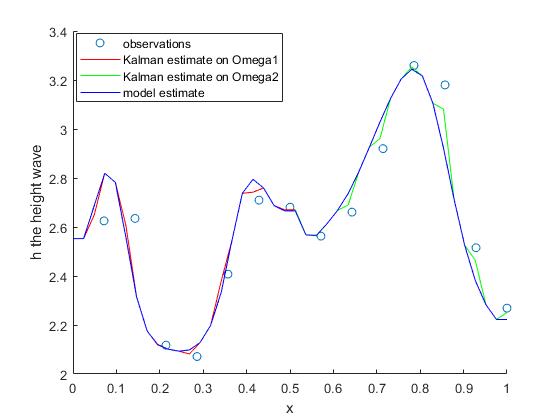}} \quad
\subfloat[][\emph{SWE solution  $x[1]_{53}\in \mathbb{R}^{n_{x}}$ (model estimate) and DD-KF estimate  $\widehat{x}_{53}^{\Omega\times \Delta_{2}}[1]\in \mathbb{R}^{n_{x}}$ at $t_{53}$ on $\Omega$.}]
{\includegraphics[width=.68\textwidth]{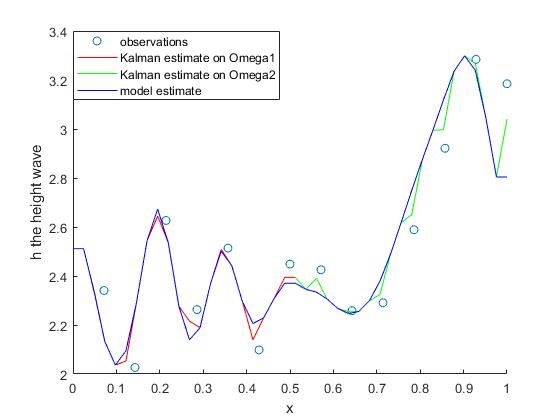}}
\caption{SWE solution and its DD-KF estimate.}
\label{fig_data}
\end{figure}

\begin{figure}[h!]
\centering
 \subfloat[][\emph{SWE solution $x[1]_{53}\in \mathbb{R}^{n_{x}}$  (model estimate) and  DD-KF estimates  $\widehat{x}_{53}^{\Omega\times \Delta_{1}}[1]\in \mathbb{R}^{n_{x}}$ at  $t_{53}$ on $\Omega$ (Kalman estimate 1 and Kalman estimate 2) by considering $\sigma_{m}^{2}=0$. As expected, Kalman estimates and model estimate overlap.}]
{\includegraphics[width=.68\textwidth]{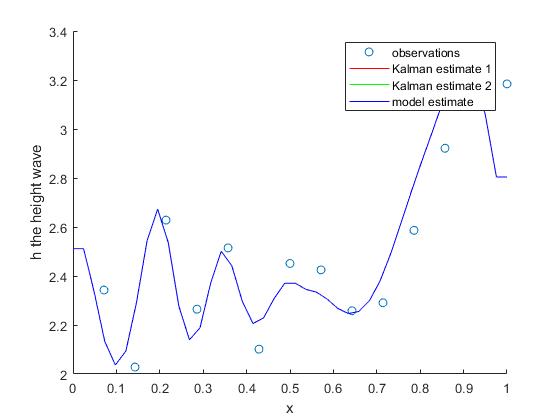}}\quad
\subfloat[][\emph{SWE solution   $x[1]_{99}\in \mathbb{R}^{n_{x}}$ (model estimate)  and DD-KF estimates $\widehat{x}_{25}^{\Omega\times \Delta_{1}}[1]\in \mathbb{R}^{n_{x}}$  at $t_{53}$ on $\Omega_1$  and $\Omega_{2}$ (Kalman estimate 1 and Kalman estimate 2) by considering the error variance $\sigma_{0}^{2}=10^{-5}$.}]
{\includegraphics[width=.68\textwidth]{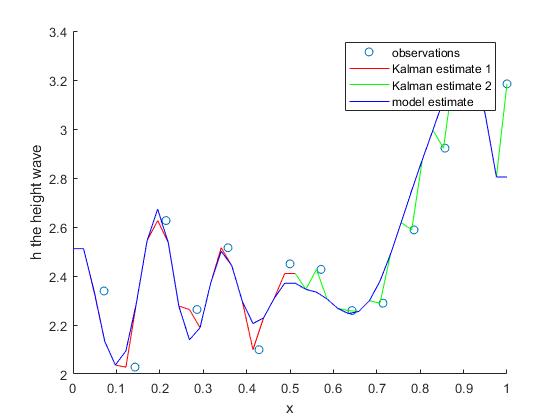}}
\caption{SWE solution and DD-KF estimates  for different choices of  $\sigma_{m}^{2}$ and $\sigma_{0}^{2}$.}
 \label{fig_R_Q}
\end{figure}

\begin{figure}[h!]
\centering
 \subfloat[][\emph{SWE solution $x[1]_{10}\in \mathbb{R}^{n_{x}}$ (model estimate)   and DD-KF estimates $\widehat{x}_{10}^{\Omega\times \Delta} [1]\in \mathbb{R}^{n_{x}}$  at $t_{10}$ on $\Omega$  (Kalman estimate 1 and Kalman estimate 2) by considering different  errors and variances observations; i.e. $v_{10}^{1}=10^{-15}\cdot \bar{v}^{1}_{10}$ and $\sigma_{1,0}^{2} = 6.67 \times 10^{-1}$ in $\Omega_1$ and $v_{10}^{2}=2\cdot \bar{v}_{10}^{2}$ and $\sigma_{2,0}^{2} = 4.28 \times 10^{0}$ in $\Omega_2$. $\bar{v}_{10}^{1}$ and $\bar{v}_{10}^{2}$ are random vectors drawn from the standard normal distribution.}]
{\includegraphics[width=.68\textwidth]{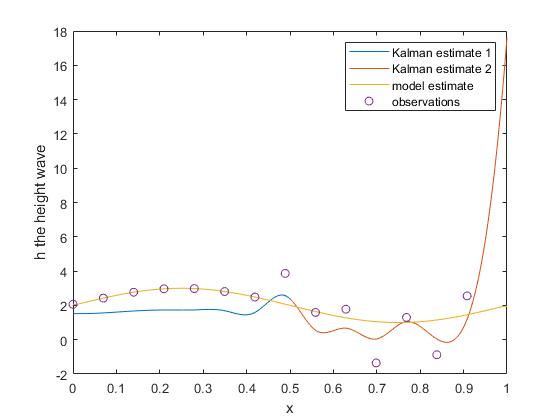}}\quad
\subfloat[][\emph{SWE solution $x[1]_{10}\in \mathbb{R}^{n_{x}}$ (model estimate)  and DD-KF estimates $\widehat{x}_{10}^{\Omega\times \Delta} [1]\in \mathbb{R}^{n_{x}}$ (Kalman estimate 1 and Kalman estimate 2) at $t_{10}$ on $\Omega$ by considering different  errors and variances observations; i.e. $v_{10}^{1}=1\cdot \bar{v}^{1}_{10}$ and $\sigma_{1,0}^{2} = 1.04 \times 10^{0}$ in $\Omega_1$ and $v_{10}^{2}=10^{-15}\cdot \bar{v}_{10}^{2}$ and $\sigma_{2,0}^{2} = 2.22 \times 10^{-1}$ in $\Omega_2$.  $\bar{v}_{10}^{1}$ and $\bar{v}_{10}^{2}$ are random vectors drawn from  standard normal distribution.}]
{\includegraphics[width=.68\textwidth]{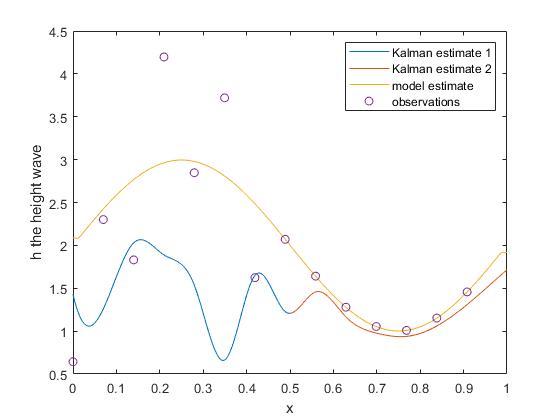}}
\caption{SWE solution and DD-KF estimates for different errors $v_{10}^{1}$, $v_{10}^{2}$ and variances observations $\sigma_{1,0}^{2}$ and $\sigma_{2,0}^{2}$  in  $\Omega_1$ and $\Omega_2$, respectively.   In order to point out the capability of DD-KF to deal with the presence of different observation errors,  we  show results obtained considering two different values of observations errors  in $\Omega_1$ and $\Omega_2$, i.e.  $v_k=[v_k^1 v_k^2]$, respectively.  Figure \ref{fig_var} (a) confirms the effect of the variance observations $\sigma_{2,0}^{2}=  4.28 \times 10^{0}$ on  DD-KF estimate $\widehat{x}_{k}[1]^{\Omega \times \Delta}$  in  $\Omega_2$; similarly, it occurs  in $\Omega_1$ when $\sigma_{1,0}^{2} = 1.04 \times 10^{0}$, as Figure \ref{fig_var} (b) shows. As a consequence DD-KF method gives more confidence to model state than to the observations in $\Omega_2$ (a) or in $\Omega_1$ (b), depending on the variance observations.}
 \label{fig_var}
\end{figure}

\section{Concluding remarks and future works}
We have presented the mathematical  framework which makes it possible  using KF  in  any application involving real time  state estimation. Borrowing Schwarz DD and PinT methods from PDEs, the framework  properly combines KF localization and model reduction in space and time, fully revisited  in a linear algebra settings. DD-KF framework makes it natural to switch from a full scale solver to a model order reduction solver for solution of subproblems for which no relevant low-dimensional reduced space should be constructed. Leveraging Schwarz and PinT  methods consistency constraints for PDEs-based models, the framework iteratively adjust local solutions by adding  the contribution of adjacent subdomains to the local filter, along  overlapping regions.
\noindent We derived and discussed main features of DD-KF framework using SWEs which are commonly used for monitoring and forecasting the water flow in rivers and open channels. Nevertheless, this framework has application on the plentiful  literature of PDE-based state estimation real-world problems. As DD-KF is intended to give the same solution of KF experiments are exclusively aimed to validate  DD-KF w.r.t. KF. Comparison against other KF methods which are approximations of KF, such as EnKF,  are out of focus of the present work.  Rather we note that the underlying idea of  DD-KF  could be applied to EnKF by changing the filter,  as it is performed in DD-KF. Specifically, in order to enforce the matching of local solutions on  adjacent  regions, local  problems should be  slightly modified by adding the smoothness-regularization constraint  to the correction phase on local solutions; such   term keeps track of contributions of adjacent domains to ensembles regions. The same modification should be done on the covariance matrices. \\
\noindent This work makes possible numerous  extensions. Among them, it makes it possible to apply
deep-learning techniques to develop consistency constraints which will ensure
that the solutions are physically meaningful even at the
boundary of the small domains in the output of the local models \cite{deeplearning4,Hahnel,deeplearning3}. Another possible extension  could be the employment of a dynamic load balancing scheme based
on  adaptive and dynamic redefining of initial decomposition \cite{arxivPPAM2019,DyDD}. Specifically, in order to optimally choose the domain decomposition configuration,  the partitioning into subdomains must satisfy certain conditions. First the computational
load assigned to subdomains must be equally distributed. Good quality partitioning also requires the volume of communication during
calculation to be kept at its minimum. In \cite{arxivPPAM2019,DyDD} the authors  employed a dynamic load balancing scheme based
on  adaptive and dynamic redefining of initial decomposition,   aimed to balance load between processors according to data
location.  In particular, the authors
focused on the introduction of a dynamic redefining of initial DD in order to deal with problems where the observations are non uniformly distributed and general sparse. This is a quite common issue in DA. More generally, one would consider DD framework in multi-physics and multi-resolution applications, by employing  adaptive and dynamic DD depending on the requirements of the application.   Overall, the employment of the framework in high performance computing state estimation systems such as on board  satellite navigation systems or autonomous vehicle tracking with GPS seems to be a fruitful research area. 
\section{Data Availability}
The Matlab code used for the numerical simulations is available from the corresponding author upon request.

\section{APPENDIX }\label{section6}

In this section, with the aim of allowing the replicability of experimental results, we describe the the whole detailed derivation of DD-KF method on  SWEs.

\subsection{Preliminaries on DD}

\noindent \textbf{Definition (reduction of matrices):}
Let $B=[B^{1} \ B^{2} \ \ldots \ B^{n}] \in \mathbb{R}^{m \times n}$ be a matrix with $m,n\ge 1$ and $B^{j}$ the $j-th$ column of $B$ and $I_{j}=\{1,\ldots,j\}$ and $I_{i,j}=\{i,\ldots,j\}$ for $i=1,\ldots,n-1$ and $j=2,\ldots,n$. The reduction of $B$ to the set $I_{j}$ is:
\begin{equation}
|_{I_{j}}: \ B\in \mathbb{R}^{m\times n} \rightarrow \ B|_{ I_{j}}=[B^{1} \ B^{2} \ \ldots \ B^{j}]\in \mathbb{R}^{m \times j}, \quad \textit{ $j=2,\ldots,n$},
\end{equation}
and to $I_{i,j}$
\begin{eqnarray}{l}
|_{ I_{i,j}}: \ B\in \mathbb{R}^{m\times n} \rightarrow \ B|_{ I_{i,j}}=[B^{i} \ B^{i+1} \ \ldots \ B^{j}]\in \mathbb{R}^{m \times j-i},  \\ 
\textit{ $i=1,\ldots,n-1$, $j=2,\ldots,n$}, 
\end{eqnarray}
where $B|_{ I_{j}}$ and $B|_{ I_{i,j}}$ denote the reductions of  $B$ to $I_{j}$ and $I_{i,j}$, respectively.\\[.2cm]
\textbf{Definition (Extension  of vectors):}
Let $w=[w_{t} \ w_{t+1} \ \ldots \ w_{n}]^{T} \in \mathbb{R}^{s}$ be a vector with $t\ge 1$, $n>1$, $s=n-t$. The extension  of $w$  to  $I_{r}$ where $r>n$ is:
\begin{equation}
EO_{I_{r}}: \ w\in \mathbb{R}^{s} \rightarrow \ EO_{I_{r}}(w)=[\bar{w}_{1} \ \bar{w}_{2} \ \ldots \ \bar{w}_{r}]^{T} \in \mathbb{R}^{r},
\end{equation}
where, for $i=1,\ldots,r$, it is: 
\begin{equation}
\bar{w}_{i} =\left\{ \begin{array}{ll} 
w_{i} & \textit{if $t\le i\le n$}\\
0 & \textit{if $i> n$ and $i<t$ \quad .}\\
\end{array} \right.
\end{equation}

\subsection{DD-KF vs KF Performance Analysis}
 \noindent  In order to  quantify the benefit of the decomposition, i.e., how much  the time complexity of the KF algorithm  may be reduced  with respect to the DD--KF based algorithm we use the so called scale-up factor introduced in \cite{CAI}.\\
\begin{definition} (\textbf{Scale-up factor})
Let $N_p \in \mathbb{N}$ be the problem size and $N_{sub}\ge 2$ the number of subdomains. Let $T_{KF}(N_p)$ and  $T_{DD-KF}(N_{sub})$ be  time complexity of  KF algorithm   and DD-KF algorithm  respectively. We introduce the scale-up factor:
\begin{equation}\label{factor}
Sc^f_{N_{sub},1}(N_p)=\frac{T_{KF}(N_p)}{N_{sub}\cdot T_{DD-KF}(N_{sub})}.
\end{equation}
\end{definition}

\noindent Following result allows us to analyze the behaviour of the scale up factor.

\begin{proposition}

Let  $r \equiv N_p/N_{sub}$. Then it holds that
\begin{equation*}
Sc^f_{N_{sub},1}(N_p)=\alpha(N_p,N_{sub})N_{sub}^{2}
\end{equation*} 
where
\begin{equation}
\alpha(N_p,N_{sub})=\frac{a_{3}+a_{2}\frac{1}{N_p}+a_{1}\frac{1}{N_p^{2}}+a_{0}\frac{1}{N_p^{3}}}{b_{3}+b_{2}\frac{1}{r}+b_{1}\frac{1}{r^{2}}+b_{0}\frac{1}{r^{3}}}.
\end{equation}
\end{proposition}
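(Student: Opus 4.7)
The plan is to establish the claim by (i) writing out exact operation counts for the serial KF step and for one subdomain of DD-KF, (ii) showing each count is a cubic polynomial in its size parameter, and (iii) performing a clean algebraic factorisation that makes the $N_{sub}^{2}$ factor drop out.

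First I would enumerate, step by step, every operation in the predictor/corrector pair \eqref{stimapredetta}--\eqref{stimapredetta1} and bound its flop cost as a function of $N_p$. The matrix--matrix products $M P M^{T}$, $(I-KH)P$ and $H P H^{T}$ are each $O(N_p^{3})$; the matrix--vector products $Mx$, $Hx$, $K(y-Hx)$ contribute $O(N_p^{2})$ terms; the inversion needed for $S^{-1}$ and the update of $K$ add terms of lower or observation-dimension order that I will retain as $O(N_p)$ and $O(1)$ contributions. Summing gives a representation
\begin{equation*}
T_{KF}(N_{p})=a_{3}N_{p}^{3}+a_{2}N_{p}^{2}+a_{1}N_{p}+a_{0},
\end{equation*}
with explicit (but unilluminating) coefficients $a_{0},\dots,a_{3}$ determined by the algorithm in Section~2.

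Next I would repeat the same bookkeeping for one local DD-KF step, as listed in \eqref{stimapredetta_DD}--\eqref{stimastato_DD}. Each local block matrix ($M_{1}$, $M_{2}$, $P_{1}$, $P_{2}$, the cross pieces $P_{1,2}$, $P_{2,1}$, the gains $K_{1},K_{2}$ and the correction terms $P_{\Omega_{1}\leftrightarrow\Omega_{2}}$, $C_{\Omega_{1}\leftrightarrow\Omega_{2}}$, $R_{1,2}$) has dimension bounded by $r=N_{p}/N_{sub}$, so every product and inversion is again of degree at most three in $r$. Collecting gives
\begin{equation*}
T_{DD\text{-}KF}(N_{sub})=b_{3}r^{3}+b_{2}r^{2}+b_{1}r+b_{0},
\end{equation*}
with $b_{0},\dots,b_{3}$ fixed by the local-problem structure. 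The main subtlety here is making sure that the overlap-related extra work (the $C_{\Omega_{1}\leftrightarrow\Omega_{2}}$ update, the Schur-like correction on the gain, and the send/receive of boundary data in the procedure of Table~\ref{DD_KF}) is also captured as $O(r^{k})$ terms with $k\le 3$; the overlap size is bounded by a constant fraction of $r$, so this is only a matter of absorbing those contributions into the coefficients $b_{i}$.

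Finally I would factor $N_{p}^{3}$ out of $T_{KF}(N_{p})$ and $r^{3}$ out of $T_{DD\text{-}KF}(N_{sub})$, then substitute into the definition \eqref{factor}:
\begin{equation*}
Sc^{f}_{N_{sub},1}(N_{p})
=\frac{N_{p}^{3}\bigl(a_{3}+a_{2}/N_{p}+a_{1}/N_{p}^{2}+a_{0}/N_{p}^{3}\bigr)}
{N_{sub}\,r^{3}\bigl(b_{3}+b_{2}/r+b_{1}/r^{2}+b_{0}/r^{3}\bigr)}.
\end{equation*}
Using $r=N_{p}/N_{sub}$ gives $N_{p}^{3}/(N_{sub}r^{3})=N_{sub}^{2}$, and the remaining ratio is exactly $\alpha(N_{p},N_{sub})$, proving the claim.

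The main obstacle is not any deep reasoning but the careful accounting of all the block operations in DD-KF, in particular verifying that the correction and matching steps responsible for enforcing overlap consistency (which have no analogue in ordinary KF) contribute only lower-order terms in $r$; once that is checked, the cubic-polynomial structure on both sides and the identity $N_{p}^{3}/(N_{sub}r^{3})=N_{sub}^{2}$ yield the stated form mechanically.
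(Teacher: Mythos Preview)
Your proposal is correct and follows essentially the same route as the paper: assume cubic-polynomial cost models $T_{KF}(N_p)=\sum_{i=0}^{3}a_iN_p^{i}$ and $T_{DD\text{-}KF}(r)=\sum_{i=0}^{3}b_ir^{i}$, factor out $N_p^{3}$ and $r^{3}$, and use $N_p^{3}/(N_{sub}\,r^{3})=N_{sub}^{2}$. The only difference is one of emphasis: the paper simply \emph{posits} the cubic form (``We let $T_{KF}(N_p)=p_1(N_p)$ \dots where $p_1,p_2\in\Pi_3$'') and proceeds directly to the algebra, whereas you spend most of your effort justifying that cubic form via explicit flop counting of the predictor/corrector and local DD-KF steps; that extra bookkeeping is sound but not something the paper actually carries out.
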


\begin{proof}
We let
$T_{KF}(N_p)=p_{1}(N_p)$ and  $T_{DD-KF}(r)=p_{2}(r)$ where $p_1$ and $p_2$ are polynomials of degree less than or equal 3 i.e. $p_{1},p_{2}\in \Pi_{3}$:
\begin{equation}
\begin{array}{ll}
T_{KF}(N_p)=a_{3}N_p^{3}+a_{2}N_p^{2}+a_{1}N_p+a_{0}\\
T_{DD-KF}(r)=b_{3}r^{3}+b_{2}r^{2}+b_{1}r+b_{0}
\end{array}.
\end{equation}
It holds
\begin{equation}
\begin{array}{ll}
Sc^f_{N_{sub},1}(m,N_p)&=\frac{a_{3}N_p^{3}+a_{2}N_p^{2}+a_{1}N_p+a_{0}}{N_{sub}(b_{3}r^{3}+b_{2}r^{2}+b_{1}r+b_{0})}\cdot r^{3}\cdot \frac{1}{r^{3}}=\frac{a_{3}N_{sub}^{3}+a_{2}\frac{n_{sub}^{3}}{N_p}+a_{1}\frac{N_{sub}^{3}}{N_p^{2}}+a_{0}\frac{N_{sub}^{3}}{N_p^{3}}}{N_{sub}(b_{3}+b_{2}\frac{1}{r}+b_{1}\frac{1}{r^{2}}+b_{0}\frac{1}{r^{3}})}\\
\\
&=\frac{N_{sub}^{3}\cdot(a_{3}+a_{2}\frac{1}{N_p}+a_{1}\frac{1}{N_p^{2}}+a_{0}\frac{1}{N_p^{3}})}{N_{sub}(b_{3}+b_{2}\frac{1}{r}+b_{1}\frac{1}{r^{2}}+b_{0}\frac{1}{r^{3}})}=\frac{a_{3}+a_{2}\frac{1}{N_p}+a_{1}\frac{1}{N_p^{2}}+a_{0}\frac{1}{N_p^{3}}}{b_{3}+b_{2}\frac{1}{r}+b_{1}\frac{1}{r^{2}}+b_{0}\frac{1}{r^{3}}}\cdot N_{sub}^{2}
\end{array}
\end{equation}
and 
\begin{equation*}
Sc^f_{N_{sub},1}(N_p)=\alpha(N_p,N_{sub})N_{sub}^{2}
\end{equation*} 
where
\begin{equation}
\alpha(N_p,N_{sub})=\frac{a_{3}+a_{2}\frac{1}{N_p}+a_{1}\frac{1}{N_p^{2}}+a_{0}\frac{1}{N_p^{3}}}{b_{3}+b_{2}\frac{1}{r}+b_{1}\frac{1}{r^{2}}+b_{0}\frac{1}{r^{3}}}.
\end{equation}
\end{proof}
%\begin{enumerate}

%\item For $n\rightarrow +\infty$, $r$ kept fixed, so that   $p$ increases too,  we have  $\alpha(n,m,n_{sub}) \sim 1 $.
%\item For $n$ fixed and $n_{sub} \sim n$ we get $r \sim 1$. We have that $\alpha(n,m,n_{sub})\leq 1$. 
%\end{enumerate}

%and 
%\begin{equation}\label{bound_alfa}
%Sc^f_{n_{sub},1}(m,n) \sim n_{sub}^{2}
%\end{equation} 
\noindent Recalling that $r=\frac{N
_p}{N_{sub}}$ and $Sc^f_{N_{sub},1}(N_p)=\alpha(N_p,N_{sub})N_{sub}^{2}$, this result says that
the performance gain of DD-KF algorithm in terms of  reduction of time complexity, scales as the number of subdomains $N_{sub}$ squared, where the scaling factor depends on  $\alpha(N_p,N_{sub})$, that is,  on the time complexity of the algorithms.

\subsection{SWE setup}
Neglecting the Coriolis force and frictional forces and assuming unit width, SWEs are:
\begin{equation}\label{SWEsetup}
\left \{
\begin{array}{ll}
\frac{\partial h}{\partial t}+\frac{\partial vh}{\partial {x}}=0\\
\frac{\partial vh}{\partial t}+\frac{\partial (v^{2}h+\frac{1}{2}gh^{2})}{\partial {x}}=0
\end{array}
\right . 
\end{equation}
where the independent variables ${x}$ and $t$ make up the spatial dimension and time. The dependent
variables are $h$, which is the height with respect to the surface and $v$, i.e. 
the horizontal velocity, while $g$ is the gravitational acceleration.\\
In order to write  SWEs in a compact form, we introduce the vectors
\begin{equation}
\textbf{x}:=\left[\begin{array}{ll}
h\\
vh
\end{array}\right]\quad 
f(\textbf{x}):=\left[\begin{array}{ll}
vh\\
v^{2}h+\frac{1}{2}gh^{2}
\end{array}\right],
\end{equation}
the SWEs can be rewritten as follows
\begin{equation}\label{80}
\frac{\partial \textbf{x}}{\partial t}+\frac{\partial f(\textbf{x)}}{\partial {x}}=0.
\end{equation}
The initial boundary problem for the SWEs is 
\begin{equation}
\left\{\begin{array}{lllll}
\textbf{x}(t+\Delta t,x)=\mathcal{M}_{t,t+\Delta t}(\textbf{x}(t,x)) \quad \forall t, t+\Delta t \in [0,1.5]\\
\textbf{x}(0,x):=\left[\begin{array}{ll}h(0,x)\\
h(0,x)v(0,x)
\end{array}\right] =\left[\begin{array}{ll} 2+\sin(2\pi x)\\
0 \end{array} \right] \quad \forall x \in \Omega \end{array}\right.
\end{equation}
with the following reflective boundary conditions on $v$ and free boundary conditions on $h$
\begin{equation}\label{boundary_cond}
\textbf{x}(t,0):=\left[\begin{array}{ll}h(t,x_{1})\\
-h(t,x_{1})v(t,x_{1})
\end{array}\right]\quad
\textbf{x}(t,x_{n_{x}-1}):=\left[\begin{array}{ll}h(t,x_{n_{x}-2})\\
-h(t,x_{n_{x}-2})v(t,x_{n_{x}-2})
\end{array}\right] \quad \forall t\in \Delta
\end{equation} 
where
\begin{equation}\label{83}
\mathcal{M}_{t,t+\Delta t}(\textbf{x}(t,x)):=\textbf{x}(t,x)-\int_{t}^{t+\Delta t} \frac{\partial f(\textbf{x})}{\partial x} ds.
\end{equation}
We note that  $\mathcal{M}_{t,t+\Delta t}(\textbf{x}(t,x))$ should include Coriolis forces and also frictional forces, if they are present in the SWEs; in particular, as these quantities  show off as the right hand side of (\ref{80}), they will be also included in the right hand side of (\ref{83}) as additive terms under  the integral. \\ 
The state of the system at each time $t_{k+1}\in \Delta$, $k=0,1,\ldots,nt-2$ is:
\begin{equation}
\textbf{x}(t_{k+1})\equiv \textbf{x}_{k+1}:=\left[\begin{array}{lll}
x[1]_{k+1}\\
x[2]_{k+1}  
\end{array}\right]\in \mathbb{R}^{2(n_{x}-2)}
\end{equation}
where
\begin{equation}
 \begin{array}{lll}
  x[1]_{k+1}&\:= \{x[1](t_{k+1},x_{i})\}_{i=1,\ldots,n_{x}-1} &:=\{h(t_{k+1},x_{i})\}_{i=1,\ldots,n_{x}-1} \in \mathbb{R}^{(n_{x}-2)} \\ x[2]_{k+1}&:= \{x[2](t_{k+1},x_{i})\}_{i=1,\ldots,n_{x}-1}&:=\{v(t_{k+1},x_{i})h(t_{k+1},x_{i})\}_{i=1,\ldots,n_{x}-1} \in \mathbb{R}^{(n_{x}-2)} 
 \end{array}.
 \end{equation}
\noindent From  Lax-Wendroff scheme \cite{LeVeque}, we obtain the following discrete formulation of the SWEs
\begin{equation}\label{modello_swes}
\textbf{x}_{k+1}=M_{k,k+1}\textbf{x}_{k}+b_{k}+w_{k}
\end{equation}
where 
\begin{equation}
M_{k,k+1}=\left[\begin{array}{lllllllllllll}
M[1]_{k,k+1} \ \ O_{n_{x}-2}\\
M[2,1]_{k,k+1} \ \ M[2]_{k,k+1}
\end{array}\right] \in \mathbb{R}^{2(n_{x}-2)\times 2(n_{x}-2)} 
\end{equation}
with $O_{n_{x}-2}\in \mathbb{R}^{n_{x}-2\times n_{x}-2}$ the null matrix and $M[1]_{k,k+1}\in \mathbb{R}^{(n_{x}-2)\times (n_{x}-2)}$, $M[2,1]_{k,k+1}\in \mathbb{R}^{(n_{x}-2)\times (n_{x}-2)}$, $M[2]_{k,k+1}\in \mathbb{R}^{(n_{x}-2)\times (n_{x}-2)}$ the following tridiagonal matrices 
{\footnotesize
\begin{equation}\label{section7M}
M[1]_{k,k+1}=\left[\begin{array}{lllllllllllll}
\psi_{1}^{k} & \eta_{2}^{k} & & & \\
-\eta_{1}^{k}& \psi_{2}^{k}  &  \eta_{3}^{k} & &  \\
& \ddots & \ddots & \ddots   \\
& &-\eta_{n_{x}-4}^{k} & \psi_{n_{x}-3}^{k}  &\eta_{n_{x}-2}^{k}  \\
& &  &-\eta_{n_{x}-3 }^{k} & \psi_{n_{x}-2}^{k}
\end{array}\right]
\end{equation}
\begin{equation}
M[2,1]_{k,k+1}=\left[\begin{array}{lllllllllllll}
0 & \chi_{2}^{k}& & & \\
-\chi_{1}^{k}& 0 &  \chi_{3}^{k} & & \\
& \ddots & \ddots & \ddots   \\
& &-\chi_{n_{x}-4}^{k} & 0  &\chi_{n_{x}-2}^{k}  \\
& &  &-\chi_{n_{x}-3 }^{k}  & 0
\end{array}\right]
\end{equation}
\begin{equation}
M[2]_{k,k+1}=\left[\begin{array}{lllllllllllll}
\phi_{1}^{k} & \xi_{2}^{k}& & & \\
-\xi_{1}^{k}& \phi_{2}^{k} & \xi_{3}^{k} & & \\
& \ddots & \ddots & \ddots   \\
& &-\xi_{n_{x}-4}^{k} & \phi_{n_{x}-3}^{k}  &\xi_{n_{x}-2}^{k}  \\
& &  &-\xi_{n_{x}-3}^{k}  & \phi_{n_{x}-2}^{k}
\end{array}\right]
\end{equation}
and the vector $b\in \mathbb{R}^{2(n_{x}-2)}$
\begin{equation}
b_{k}:=\left[\begin{array}{ll}
b[1]_{k}\\
b[2]_{k}
\end{array}\right]\in \mathbb{R}^{2(n_{x}-2)}
\end{equation}
with 
\begin{equation}\label{b[1]}
b[1]_{k}:=\left[\begin{array}{ll}
\eta_{0}^{k}h_{0}^{k}\\
0\\
\vdots\\
0\\
\eta_{n_{x}-1}^{k}h_{n_{x}-1}^{k}
\end{array}\right]\in \mathbb{R}^{n_{x}-2} \quad 
b[2]_{k}:=\left[\begin{array}{ll}
\xi_{0}^{k}v_{0}^{k}h_{0}^{k}-\chi_{0}^{k}h_{0}^{k}\\
0\\
\vdots
\\
0
\\
\xi_{n_{x}-1}^{k}h_{n_{x}-1}^{k}+\xi_{n_{x}-1}^{k}v_{n_{x}-1}^{k}h_{n_{x}-1}^{k}
\end{array}\right]\in \mathbb{R}^{n_{x}-2} 
\end{equation}
}
\noindent where 
\begin{displaymath}
\begin{array}{lllllll}
\eta_{i}^{k}&:=\frac{\alpha}{2}[v_{i}^{k}+\alpha((v_{i}^{k})^{2}+\frac{1}{2}gh_{i}^{k})] & \quad  \psi_{i}^{k}&:=1-4\alpha^{2}((v_{i}^{k})^{2}+\frac{1}{2}gh_{i}^{k})\\
\xi_{i}^{k}&:=\frac{\alpha}{2}[\alpha + v_{i}^{k}] & \quad  \phi_{i}^{k}&:=\left(1+4\alpha^{2}\right)
\end{array}
\end{displaymath}
\begin{displaymath}
\chi_{i}^{k}:=\frac{1}{2}gh_{i}^{k}
\end{displaymath}
with $\alpha:=\frac{\Delta t}{2\Delta x}$ and $h_{i}^{k}:=h(t_{k},x_{i})$, $v_{i}^{k}:=v(t_{k},x_{i})$.\\
We note that the discrete model in (\ref{modello_swes}) can be rewritten as follows
\begin{equation}\label{1-2_model}
\begin{array}{ll}
x[1]_{k+1}={M}[1]_{k,k+1}x[1]_{k}+b[1]_{k}\\
x[2]_{k+1}={M}[2]_{k,k+1}x[2]_{k}+\tilde{b}[2]_{k}
\end{array}
\end{equation}
with 
\begin{equation}\label{tilde_b[2]}
\tilde{b}[2]_{k}:={b}[2]-M[2,1]_{k,k+1}x[1]_{k}.
\end{equation}
In particular, if we define 
\begin{equation}\label{S-C}
S:=\max \left(|v-\sqrt{gh}|,|v+\sqrt{gh}|\right),
\end{equation}
the stability condition of  Lax-Wendroff is
\begin{equation}\label{S-Cbis}
S\cdot \frac{\Delta t}{\Delta x} \le 1,
\end{equation}
for the condition (\ref{S-Cbis}) to be satisfied, at each iteration $k=0,1,\ldots,nt$ we choose: $\Delta t=0.8\cdot \frac{\Delta x}{S}$.
\begin{enumerate}
\item Decomposition Step.
\begin{itemize}
\item Decomposition of  $\Omega$ into two subdomains with overlap region:
\begin{equation}
\begin{array}{ll} 
\Omega_{1}=[0,x_{n_{1}}] \quad \textrm{with $D_{n_{1}+1}(\Omega_1)=\{x_{i}\}_{i=0,1,\ldots,n_1}$ }\\
\Omega_{2}=[x_{n_{1}-s+1},1] \quad  \textrm{with $D_{n_{2}+1}({\Omega}_{2})=\{x_{i}\}_{i=n_{1},\ldots,n-1}$}
\end{array}
\end{equation}
\item Decomposition of  $\Delta$ into two subsets with overlap region:
\begin{equation}
\begin{array}{ll}
\Delta_{1}=[0,t_{s_{1}-1}] \quad \textrm{with $D_{s_{1}}(\Delta_{1})=\{t_{k}\}_{k=0,\ldots,s_{1}-1}$}  \\
\Delta_{2}=[t_{s_{1}-s_{1,2}+1},1.5] \quad  \textrm{with $D_{s_{2}}(\Delta_{2})=\{t_{k}\}_{k=s_{1}-s_{1,2}+1,\ldots,nt-1}$}.
\end{array}
\end{equation}
\end{itemize}
\item Local initial conditions on  $\Delta_{1}$ and $\Delta_{2}$:
\begin{equation}\label{initial}
\begin{array}{ll}
x[1]_{0}^{\Delta_{1}}=h(0,x)=:x[1]_{0}^{\Delta_{0}}\\
x[2]_{0}^{\Delta_{1}}=h(0,x)v(0,x)=:x[2]_{0}^{\Delta_{0}}
\end{array}\quad
\begin{array}{ll}
x[1]_{{s_{1}-1}}^{\Delta_{2}}=x[1]_{{s_{1}-1}}^{\Delta_{1}}\\
x[2]_{{s_{1}-1}}^{\Delta_{2}}=x[2]_{{s_{1}-1}}^{\Delta_{1}}
\end{array}.
\end{equation}
 
\item Model decomposition in (\ref{1-2_model}).
\\
Decomposition of matrices $M[1]_{k+1}\in \mathbb{R}^{n_{x}-2}$ at each step $k=0,1,\ldots,nt-2$ 
{\footnotesize
\begin{equation}
M[1]_{k,k+1}^{1}:=M[1]_{k,k+1}|_{I_{1}\times I_{1}}=\left[\begin{array}{lllllllllllll}
\psi_{1}^{k} & \eta_{2}^{k} & & & \\
-\eta_{1}^{k}& \psi_{2}^{k} &  \eta_{3}^{k} & &  \\
& \ddots & \ddots & \ddots   \\
& &-\eta_{n_{1}-2}^{k} & \psi_{n_{1}-1}^{k}  &\eta_{n_{1}}^{k}  \\
& &  &-\eta_{n_{1}-1 }^{k} &  \psi_{n_{1}}^{k} 
\end{array}\right]\in \mathbb{R}^{n_{1}\times n_{1}}
\end{equation}
\begin{equation}
M[1]_{k,k+1}^{1,2}:=M[1]_{k,k+1}|_{I_{1}\times I_{2}}=\left[\begin{array}{lllllllllllll}
0 &\cdots  &\cdots &\cdots &0 &\cdots &0 \\
0& \cdots &\cdots  &\cdots &0 &\cdots & 0 \\
\vdots & &  & & & &\vdots \\
& &  & & \\
0&\cdots &0& \eta_{n_{1}+1}^{k} &0    & \cdots  &0  \\
\end{array}\right]\in \mathbb{R}^{n_{1}\times n_{2}}
\end{equation}
\begin{equation}
M[1]_{k,k+1}^{2,1}:=M[1]_{k,k+1}|_{I_{1}\times I_{2}}=\left[\begin{array}{lllllllllllll}
0 & \cdots&0 &0 &\eta_{n_{1}}^{k} &0 &\cdots &0 \\
0& \cdots &  0 &\cdots &0 &0 &\cdots &0 \\
\vdots& & & & \vdots &\vdots & & \vdots \\
& &  & & \\
0& \cdots &  0 &\cdots &0&0 &\cdots &0 \\
\end{array}\right]\in \mathbb{R}^{n_{2}\times n_{1}}
\end{equation}
\begin{equation}
M[1]_{k,k+1}^{2}:=M[1]_{k,k+1}|_{I_{2}\times I_{2}}=\left[\begin{array}{lllllllllllll}
\psi_{n_{1}}^{k} & \eta_{n_{1}+1}^{k} & & & \\
-\eta_{n_{1}}^{k}& \psi_{n_{1}+1}^{k} &  \eta_{n_{1}+2}^{k} & &  \\
& \ddots & \ddots & \ddots   \\
& &-\eta_{n_{x}-4}^{k} & \psi_{n_{x}-3}^{k} &\eta_{n_{x}-2}^{k}  \\
& &  &-\eta_{n_{1}-3 }^{k} & \psi_{n_{x}-2}^{k}
\end{array}\right]\in \mathbb{R}^{n_{2}\times n_{2}}.
\end{equation}
}
\noindent We  proceed in the same way to get the decomposition of  matrix $M[2]_{k,k+1} \in \mathbb{R}^{n_{x}-2\times n_{x}-2}$ in (\ref{1-2_model}) into $M[2]_{k,k+1}^{1} \in \mathbb{R}^{n_{1}\times n_{1}}$, $M[2]_{k,k+1}^{1,2}\in \mathbb{R}^{n_{1}\times n_{2}}$, $M[2]_{k,k+1}^{2,1}\in \mathbb{R}^{n_{2}\times n_{1}}$ and $M[2]_{k,k+1}^{2}\in \mathbb{R}^{n_{2}\times n_{2}}$.

\item Decomposition of  $H_{k+1}\in \mathbb{R}^{m\times (n_{x}-2)}$ into $H_{k+1}^{1}\in \mathbb{R}^{m\times n_{1}}$ and $H_{k+1}^{2}\in \mathbb{R}^{m\times n_{2}}$ as in (\ref{mat_H}).
\end{enumerate}

\subsection{DD-KF method.}
\begin{enumerate}
\item For $j=1,2$ and $k=\bar{s}_{j-1}+1,\ldots,\bar{s}_{j-1}+s_{j}$, we apply  DD-KF method on $\Delta_{1}$ and $\Delta_{2}$ by considering the following matrices:
\begin{equation}
\begin{array}{llllll}
M_{1}[i]&\equiv M[i]_{k,k+1}^{1} &\quad M_{1,2}[i]&\equiv M[i]_{k,k+1}^{1,2}\\
M_{2}[i]&\equiv M[i]_{k,k+1}^{2}& \quad M_{2,1}[i]&\equiv M[i]_{k,k+1}^{2,1}\\
P_{1}&=O_{n_{1}\times n_{1}}& \quad  P_{2}&=O_{n_{2}\times n_{2}}\\ P_{1,2}&=O_{n_{1}\times n_{2}} & \quad P_{2,1}&=O_{n_{2}\times n_{1}}
\end{array}.
\end{equation}
In particular, we note:
\begin{equation}
    M_{1,2}[i]\equiv \left[\begin{array}{lll} O & M_{1,3}[i]\\ 
    O& M_{2,3}[i]
    \end{array}\right]\quad  M_{2,1}[i]\equiv \left[\begin{array}{lll} \tilde{M}_{1,2}[i] & O\\ 
     \tilde{M}_{2,1}[i] & O
    \end{array}\right].
\end{equation}
\item Send and receive  boundary conditions from  adjacent domains and compute the vectors:
\begin{equation}\label{boundary}
b_{1,k}[i]=\left[\begin{array}{lll} M_{1,3}[i]\\ 
    M_{2,3}[i]
    \end{array}\right]\widehat{x}_{2,k}[i]|_{\Gamma_{1}} \quad
    b_{2,k}[i]=\left[\begin{array}{lll} \tilde{M}_{1,2}[i]\\ 
    \tilde{M}_{2,1}[i]
    \end{array}\right]\widehat{x}_{1,k}[i]|_{\Gamma_{2}}.
\end{equation}
\item For $j=1,2$ and $k=\bar{s}_{j-1}+1,\ldots,\bar{s}_{j-1}+s_{j}$, compute the predicted state estimates $x_{1,k+1}[i]\in \mathbb{R}^{n_{1}}$ and $x_{2,k+1}[i]\in \mathbb{R}^{n_{2}}$ for $ i=1,2$, as follows:
\begin{equation}
\begin{array}{ll}
x_{1,k+1}[i]=M_{1}[i]\widehat{x}[i]_{k}^{\Delta_{j-1}}|_{I_{1}}+\bar{b}[i]_{k}|_{I_{1}}+b_{1,k}\\
x_{2,k+1}[i]=M_{2}[i]\widehat{x}[i]_{\bar{s}_{j}}^{\Delta_{j-1}}|_{I_{2}}+\bar{b}[i]|_{I_{2}}+b_{2,k}
\end{array},
 \end{equation}
 with 
 \begin{equation}
 \bar{b}[i]_{k}=\left\{\begin{array}{lll}
 b[1]_{k} & \textrm{in (\ref{b[1]})} & \textrm{if $i=1$}\\
  \tilde{b}[2]_{k} &\textrm{in (\ref{tilde_b[2]})} & \textrm{if $i=2$} 
 \end{array}.\right.
 \end{equation}
\item For $j=1,2$ and $k=\bar{s}_{j-1},\ldots,\bar{s}_{j-1}+s_{j}-1$, compute DD-KF estimates as in (\ref{stimastato_DD}) on $\Delta_{j}$ i.e. $ \widehat{x}_{1,k+1}^{\Delta_{j}}[i] \equiv \widehat{x}[i]_{1,k+1}\in \mathbb{R}^{n_{1}}$ and $\widehat{x}_{2,k+1}^{\Delta_{j}}[i] \equiv \widehat{x}[i]_{2,k+1}\in \mathbb{R}^{n_{2}}$. In particular, in the time interval $\Delta_{j}$, by considering the boundary conditions in (\ref{boundary_cond}), DD-KF estimates on $\Omega_{1}$ and $\Omega_{2}$ are 
\begin{equation}
\begin{array}{ll}
\widehat{x}_{k+1}^{\Omega_{1}\times \Delta_{j}}[i] &:=\left[\begin{array}{ll}
{x}(t_{k+1},0)[i]\\
\widehat{x}_{1,k+1}^{\Delta_{j}}[i]\\
\widehat{x}_{2,k+1}^{\Delta_{j}}[i](1)
\end{array}\right]\in \mathbb{R}^{n_{1}+1} \\
\widehat{x}_{k+1}^{\Omega_{2}\times \Delta_{j}}[i] &:=\left[\begin{array}{ll}
\widehat{x}_{1,k+1}^{\Delta_{j}}[i](n_{1}-s)\\
\widehat{x}_{2,k+1}^{\Delta_{j}}[i]\\
{x}(t_{k+1},x_{n_{x}-1})[i]
\end{array}\right]\in \mathbb{R}^{n_{2}+1} 
\end{array}
\end{equation}
where $\widehat{x}_{r,k+1}^{\Delta_{j}}[i](1)$, $\widehat{x}_{r,k+1}^{\Delta_{j}}[i]( n_{1}-s)$ are the first and the $n_{1}-s$ components of $\widehat{x}_{r,k+1}^{\Delta_{j}}[i]$, $r=1,2$.
We refer to 
\begin{equation}
\widehat{x}_{k+1}[i]^{\Omega\times \Delta_{j}}:=\left[\begin{array}{ll}
\widehat{x}_{k+1}^{\Omega_{1}\times \Delta_{j}}[i]\\
\widehat{x}_{k+1}^{\Omega_{1,2}\times \Delta_{j}}[i]\\
\widehat{x}_{k+1}^{\Omega_{2} \times \Delta_{j}}[i]
\end{array}\right]\in \mathbb{R}^{n_{x}}
\end{equation}
as the estimate of the wave height and velocity (if $i=1,2$ respectively) obtained by applied the DD-KF method on $\Omega$, where on the spatial overlap $\Omega_{1,2}$, we have considered the arithmetic mean between DD-KF estimates i.e.
\begin{equation}
 \widehat{x}_{k+1}^{\Omega_{1,2}\times \Delta_{j}}[i]:=\left[\begin{array}{ll}
 \frac{\widehat{x}_{k+1}^{\Omega_{1}\times \Delta_{j}}[i]|_{I_{1,2}
}+ \widehat{x}_{k+1}^{\Omega_{2}\times \Delta_{j}}[i]|_{I_{1,2}
}}{2} \end{array}\right]
\end{equation}
with $I_{1,2}$ the index set defined in (\ref{set_indici}).
\end{enumerate}
\begin{table}
\label{DD-SWE}
\caption{ DD-KF to SWEs}
    \textbf{procedure DD-KF-SWE}\\
{\bf Domain Decomposition Step}: Decomposition of $\Omega$ and of $\Delta$\\
{\bf SWE Model setup} as in (\ref{SWEsetup}) -- (\ref{S-C})\\
{\bf Define} Local initial conditions on  $\Delta_{1}$ and $\Delta_{2}$ as in (\ref{initial})\\
{\bf Reduction of $M_{k,k+1}$} in (\ref{1-2_model})\\
{\bf Reduction of  $H_{l+1}\in \mathbb{R}^{n_{obs}\times (n_{x}-2)}$} as in (\ref{mat_H}).\\
{\bf for} $k=1,2$ \\
{\bf for } $l=\bar{s}_{k-1}+1,\ldots,\bar{s}_{k-1}+s_{k}$, as in (\ref{essesign}) \% apply  DD-KF method on $\Delta_{k}$ \\
 {\bf Send and receive}  boundary conditions from  adjacent domains as in (\ref{boundary} )\\{\bf Compute} 
\begin{equation}\nonumber
b_{1,l}[i]=\left[\begin{array}{lll} M_{1,3}[i]\\ 
    M_{2,3}[i]
    \end{array}\right]\widehat{x}_{2,l}[i]|_{\Gamma_{1}} \quad
    b_{2,l}[i]=\left[\begin{array}{lll} \tilde{M}_{1,2}[i]\\ 
    \tilde{M}_{2,1}[i]
    \end{array}\right]\widehat{x}_{1,l}[i]|_{\Gamma_{2}}.
\end{equation}
{\bf Compute} the predicted state estimates $x_{1,l+1}[i]\in \mathbb{R}^{n_{1}}$ and $x_{2,l+1}[i]\in \mathbb{R}^{n_{2}}$ 
\begin{equation}\nonumber
\begin{array}{ll}
x_{1,l+1}[i]=M_{1}[i]\widehat{x}[i]_{l}^{\Delta_{k-1}}|_{I_{1}}+\bar{b}[i]_{l}|_{I_{1}}+b_{1,l}\\
x_{2,l+1}[i]=M_{2}[i]\widehat{x}[i]_{\bar{s}_{k}}^{\Delta_{k-1}}|_{I_{2}}+\bar{b}[i]|_{I_{2}}+b_{2,l}
\end{array},
 \end{equation}
 with 
 \begin{equation}\nonumber
 \bar{b}[i]_{l}=\left\{\begin{array}{lll}
 b[1]_{l} & \textrm{in (\ref{b[1]})} & \textrm{if $i=1$}\\
  \tilde{b}[2]_{l} &\textrm{in (\ref{tilde_b[2]})} & \textrm{if $i=2$} 
 \end{array}.\right.
 \end{equation}
{\bf Compute DD-KF estimates} as  in (\ref{stimastato_DD}) on $\Omega_i \times\Delta_{k}$ \\  
\begin{equation}\nonumber
\begin{array}{ll}
\widehat{x}_{l+1}^{\Omega_{1}\times \Delta_{k}}[i] &:=\left[\begin{array}{ll}
{x}(t_{l+1},0)[i]\\
\widehat{x}_{1,l+1}^{\Delta_{k}}[i]\\
\widehat{x}_{2,l+1}^{\Delta_{k}}[i](1)
\end{array}\right]\in \mathbb{R}^{n_{1}+1} \\
\widehat{x}_{l+1}^{\Omega_{2}\times \Delta_{k}}[i] &:=\left[\begin{array}{ll}
\widehat{x}_{1,l+1}^{\Delta_{k}}[i](n_{1}-\delta/2)\\
\widehat{x}_{2,l+1}^{\Delta_{k}}[i]\\
{x}(t_{l+1},x_{n_{x}-1})[i]
\end{array}\right]\in \mathbb{R}^{n_{2}+1} 
\end{array}
\end{equation}
{\bf Compute DD-KF estimate on $\Omega \times \Delta_k$ } 
\begin{equation}\nonumber
\widehat{x}_{l+1}[i]^{\Omega\times \Delta_{k}}:=\left[\begin{array}{ll}
\widehat{x}_{l+1}^{\Omega_{1}\times \Delta_{k}}[i]\\
\widehat{x}_{l+1}^{\Omega_{1,2}\times \Delta_{k}}[i]\\
\widehat{x}_{l+1}^{\Omega_{2} \times \Delta_{k}}[i]
\end{array}\right]\in \mathbb{R}^{n_{x}}
\end{equation}
and  
\begin{equation}\nonumber
 \widehat{x}_{l+1}^{\Omega_{1,2}\times \Delta_{k}}[i]:=\left[\begin{array}{ll}
 \frac{\widehat{x}_{l+1}^{\Omega_{1}\times \Delta_{k}}[i]|_{I_{1,2}
}+ \widehat{x}_{l+1}^{\Omega_{2}\times \Delta_{k}}[i]|_{I_{1,2}
}}{2} \end{array}\right]
\end{equation}
with $I_{1,2}$  defined in (\ref{set_indici}).\\
{\bf endprocedure}
\end{table}
\footnotesize
\begin{table}[]
    \centering
    \begin{tabular}{|l|l|} \hline
   $ \{t_{k}\}_{k=0,1,\ldots,r+1}$,  $t_{k}=k\Delta t$, $\Delta t= \frac{T}{r+1}$ & points of $\Delta= [0,T]$\\
   $x_{k}\equiv x(t_{k})$ & state  at $t_{k}$         \\
   $M_{k,k+1}\in \mathbb{R}^{N\times N}$& discretization of the tangent\\ &linear operator  of $\mathcal{M}_{t,t+\Delta t}( x(t))$      \\
   $y_{k}\equiv y(t_{k})\in \mathbb{R}^{m\cdot d}$&  observations vector\\
   $H_{k+1}\in \mathbb{R}^{m\cdot d \times N}$&  discretization of tangent\\ & linear approximation of $\mathcal{H}_{t_{k+1}}$\\
   $M=\left[\begin{array}{c|c}
M_{1} & M_{1,2}\\
\hline
M_{2,1} & M_{2}
 \end{array} \right]$ eqs. (\ref{deco_M})-(\ref{set_indici_tilde})
 & block decomposition of $M= M_{k,k+1}$ \\
 $ H_{k+1}=\left[H_{k+1}^{1} \ H_{k+1}^{2}\right]$ eqs. (\ref{deco_M})-(\ref{set_indici_tilde}) & block decomposition of $H_{k+1}$\\
$x_{1,\bar{s}_{j-1}}^{\Delta_{j}}=x_{1,\bar{s}_{j-1}}^{\Delta_{j-1}}\quad  x_{2,\bar{s}_{j-1}}^{\Delta_{j}}=x_{2,\bar{s}_{j-1}}^{\Delta_{j-1}}$ & DD-KF initial conditions of $M_{k,k+1}$\\
$ \Gamma_{1}:=\partial \Omega_{1}\cap \Omega_{2} \quad, \quad   \Gamma_{2}:=\partial \Omega_{2}\cap \Omega_{1}$ &  boundary of$\Omega_1$ and $\Omega_2$\\
$x_{1,k}^{\Delta_{j}}|_{\partial \Omega_{1} \setminus \Omega_{2}}={x}_{k}^{\partial \Omega_{1} \setminus \Omega_{2}} $ &  boundary conditions on $\Omega_1$\\
$x_{1,k}^{\Delta_{j}}|_{\Gamma_{1}} = x_{2,k}|_{\Gamma_{1}}$ & boundary conditions  on $\Omega_1$\\
$x_{2,k}^{\Delta_{j}}|_{\partial \Omega_{2} \setminus  \Omega_{1}}={x}_{k}^{\partial \Omega_{2} \setminus \Omega_{1}}$ &boundary conditions  on $\Omega_2$ \\
$ x_{2,k}^{\Delta_{j}}|_{\Gamma_{2}}  = x_{1,k}|_{\Gamma_{2}}$ & boundary conditions  on $\Omega_2$\\
   $Q_{k}\in \mathbb{R}^{N\times N}$ & covariance matrices \\ &of the errors on the model\\
    $R_{k}\in \mathbb{R}^{m\cdot d \times m\cdot d}$& covariance matrices \\& of the errors  on the observations\\
    $D_{n}(\Omega)=\{{x}_{j}\}_{j=1,\ldots,n}$ & the mesh partitioning of $\Omega$\\
   $ \Omega_{1,2}:=\Omega_{1}\cap \Omega_{2}$ &  the space overlap region\\
  $ D_{s}(\Omega_{1,2}):=D_{n_{1}}(\Omega_{1})\cap D_{n_{2}}(\Omega_{2})=\{x_{i}\}_{i\in I_{1,2}}$& 
 the discretization of $\Omega_{1,2}$\\
$I_{1}=\{1,\ldots,n_{1}\}$ & the index set of $D_{n_{1}}(\Omega_{1})$\\
$I_{2}=\{n_{1}-s+1,\ldots,n\}$ & the index set of $D_{n_{2}}(\Omega_{2})$\\
$ I_{1,2}=\{n_{1}-s+1,\ldots,n_{1}\}$ & the index set of $D_{n_{1}}(\Omega_{1})$\\
$D_{s_{j}}(\Delta_{j})=\{t_{k}\}_{k=\bar{s}_{j-1},\ldots,\bar{s}_{j-1}+s_{j}}$& the discretization of $\Delta_j$\\
$ \bar{s}_{j-1}:=\sum_{l=1}^{j-1}(s_{l}-s_{l-1,l}), \quad  s_{j-1,j}\in \mathbb{N}_{0}$ &
the number of elements in common\\ &  between  subsets $\Delta_{j-1}$ and $\Delta_{j}$\\
$Q_{k}=diag(Q_{1,k},Q_{2,k})\in \mathbb{R}^{n\times n}$ &  covariance matrix\\ 
$P_{1}=M_{1}P_{1}M_{1}^{T}+P_{\Omega_{1}\leftrightarrow \Omega_{2}}+Q_{1,k}$ & local covariance matrix on $\Omega_1$\\
$P_{2}=M_{2}P_{2}M_{2}^{T}+P_{\Omega_{2}\leftrightarrow \Omega_{1}} +Q_{2,k}$&local covariance matrix on $\Omega_2$\\
$P_{i,j}:=Cov(e_{i,k+1},e_{j,k+1})$ & local covariance matrix on the overlap \\
$P_{2,1}=M_{2}P_{2,1}M_{1}^{T}+C_{\Omega_{1}\leftrightarrow \Omega_{2}}^{T}$&local covariance matrix on $\Omega_{1,2}$\\
$P_{1,2}=M_{1}P_{1,2}M_{2}^{T}+C_{\Omega_{1}\leftrightarrow \Omega_{2}}$ &local covariance matrix on $\Omega_{1,2}$\\
$C_{\Omega_{1}\leftrightarrow \Omega_{2}} =M_{1}P_{1}M_{2,1}^{T}+M_{1,2}P_{2,1}M_{2,1}^{T}+M_{1,2}P_{2}M_{2}^{T}$ &  contribution of \\&$\Omega_{1}$ and $\Omega_{2}$ to  overlapping region\\ 
$K_{1}=(P_{1}H_{k+1}|_{I_{1}}^{T}+P_{1,2}H_{k+1}|_{I_{2}}^{T})\cdot F$ & DD-KF gain\\
$K_{2}=(P_{2}H_{k+1}|_{I_{2}}^{T}+P_{2,1}H_{k+1}|_{I_{1}}^{T})\cdot F
$& DD-KF gain\\
 $F=(H_{k+1}|_{I_{1}}P_{1}H_{k+1}|_{I_{1}}^{T}+H_{k+1}|_{I_{2}}P_{2}H_{k+1}|_{I_{2}}^{T} +$ &\\$+R_{1,2}+R_{k+1})^{-1}$&\\
$ R_{1,2}=(H_{k+1}|_{I_{2}}P_{2,1}H_{k+1}|_{I_{1}}^{T}+H_{k+1}|_{I_{1}}P_{1,2}H_{k+1}|_{I_{2}}^{T})$&
 contribution of \\&$\Omega_{1}$ and $\Omega_{2}$ to  overlapping region\\
$  P_{1}=(I-K_{1}H_{k+1}|_{I_{1}})P_{1}-K_{1}H_{k+1}|_{I_{2}}P_{2,1}$ & DD-KF update of covariance matrix\\
$P_{2}=(I-K_{2}H_{k+1}|_{I_{2}})P_{2}-K_{2}H_{k+1}|_{I_{1}}P_{1,2}$&DD-KF update of covariance matrix\\
$\widehat{x}_{k+1}^{\Delta_{j}}=M_{k,k+1}\widehat{x}_{k}^{\Delta_{j}}+b_{k}+w_{k}, \quad \forall k=\bar{s}_{j-1},\ldots,\bar{s}_{j-1}+s_{j}-1$ & local model  state estimate\\
$x_{1,k+1}=M_{1}\widehat{x}_{1,k}+b_{k}|_{I_{1}}+b_{1,k}+w_k|_{I_{1}}$ & local DD-KF state estimate on $\Omega_1$\\
$x_{2,k+1}=M_{2}\widehat{x}_{2,k}+b_{k}|_{I_{2}}+b_{2,k}+w_k|_{I_{2}}$ & local  state estimate on  $\Omega_2$\\
$\widehat{x}_{1,k+1}=x_{1,k+1}+K_{1}\left[y_{k+1}-(H_{k+1}|_{I_{1}}x_{1,k}+H_{k+1}|_{I_{2}}x_{2,k})\right] $ &DD-KF estimate on $\Omega_1$ \\
$\widehat{x}_{2,k+1}=x_{2,k+1}+K_{2}\left[y_{k+1}-(H_{k+1}|_{I_{1}}x_{1,k}+H_{k+1}|_{I_{2}}x_{2,k})\right]$&DD-KF estimate on $\Omega_2$ \\ \hline

    \end{tabular}
    \caption{  Table collecting DD-KF notations}
    \label{tab:my_label}
\end{table}
\section{Declarations}
The authors confirm that the research described in this  work has not received any funds.\\
\noindent The authors confirm that there are not any conflicts of interest.\\
\noindent The authors confirm that data and code can be available at request.
\clearpage

\end{document}